\documentclass[12pt,a4paper]{amsart}
\usepackage{amsmath}
\usepackage{drpack}
\usepackage[english]{babel}
\usepackage{verbatim}
\usepackage[shortlabels]{enumitem}
\usepackage{color}
\usepackage{tikz-cd}

\newtheoremstyle{mio}%
{}{} 
{\itshape}{} 
{\bfseries}{.}{ } 
{#1 #2\thmnote{~\mdseries(#3)}} 
\theoremstyle{mio}
\newtheorem{teor}{Theorem}[section]
\newtheorem{cor}[teor]{Corollary}
\newtheorem{prop}[teor]{Proposition}
\newtheorem{lemma}[teor]{Lemma}
\newtheorem{defin}[teor]{Definition}

\newtheoremstyle{definition2}%
{}{} 
{}{} 
{\bfseries}{.}{ } 
{#1 #2\thmnote{\mdseries~ #3}} 
\theoremstyle{definition2}
\newtheorem{ex}[teor]{Example}

\title{Radical semistar operations}
\author{Dario Spirito}
\date{\today}
\address{Dipartimento di Scienze Matematiche, Informatiche e Fisiche, Universit\`a degli Studi di Udine, Udine, Italy}
\email{dario.spirito@uniud.it}
\subjclass[2020]{13A15, 13A18, 13F05, 13F30, 13G05}
\keywords{semistar operations; stable operations; scattered spaces}

\newcommand{\Min}{\mathrm{Min}}

\newcommand{\inssubmod}{\mathbf{F}}
\newcommand{\inssemistar}{\mathrm{SStar}}
\newcommand{\insrad}{\mathrm{SStar_{rad}}}
\newcommand{\insspectral}{\mathrm{SStar_{sp}}}
\newcommand{\insstable}{\mathrm{SStar_{st}}}

\newcommand{\mm}{\mathfrak{m}}
\newcommand{\qspec}{\mathrm{QSpec}}
\newcommand{\psspec}{\mathrm{PsSpec}}
\newcommand{\stdstable}[1]{\widehat{#1}}
\newcommand{\deriv}{\mathcal{D}}
\newcommand{\D}{\mathcal{D}}
\newcommand{\V}{\mathcal{V}}

\begin{document}
\begin{abstract}
We introduce and study the set of radical stable operations of an integral domain $D$. We show that their set is a complete lattice that is the join-completion of the set of spectral semistar operations, and we characterize when every radical operation is spectral (under the hypothesis that $D$ is rad-colon coherent). When $D$ is a Pr\"ufer domain such that every set of minimal prime ideals is scattered, we completely classify stable semistar operations.
\end{abstract}

\maketitle

\section{Introduction}
Let $D$ be an integral domain. Semistar operations on $D$ are a class of closure operations on the set of $D$-submodules of the quotient field $K$ of $D$, defined by Okabe and Matsuda \cite{okabe-matsuda} as a generalization of the concept of star operation, originally introduced by Krull \cite{krull_breitage_I-II} and Gilmer \cite[Chapter 32]{gilmer}. Semistar operations enjoy greater flexibility than star operations, making them a good tool to use in order to study several topics relative to the properties of ideals of $D$, as well as the properties of overrings of $D$. There are several subclasses of semistar operations that are particularly of interest, among which we cite \emph{finite type} operations, \emph{eab} operations (that are related to the valuation overrings of $D$, cf. \cite{fontana_loper-eab} and \cite[Section 4]{fifolo_transactions}) and \emph{spectral} operations (related to the spectrum of $D$, cf. \cite{anderson_two_2000,anderson_intersections_2005,localizing-semistar}). See Section \ref{sect:prelim} for a precise definition.

A semistar operation $\star$ is \emph{stable} if it distributes over finite intersections, i.e., if $(I\cap J)^\star=I^\star\cap J^\star$ for all $D$-submodules $I,J$; in particular, every spectral semistar operations is stable. Stable operations are naturally connected to \emph{localizing systems} \cite{localizing-semistar} and \emph{singular length functions} \cite[Theorem 6.5 and subsequent dicussion]{length-funct}, meaning that there are canonical order-preserving bijections between the sets of stable operations, localizing systems and singular length functions on the same domain $D$ (see Section \ref{sect:other}); in particular, any classification of stable semistar operations classifies, as well, localizing systems and singular length functions. However, while spectral semistar operations can be easily classified through subsets of the spectrum of $D$ (see \cite[Remark 4.5]{localizing-semistar} and \cite[Corollary 4.4]{topological-cons}), the same does not hold for stable operations; indeed, a standard representations and a classification of stable operations have only be obtained in very specific situations, like for one-dimensional domains with scattered maximal space (see \cite{jaff-derived}) and for Pr\"ufer domains such that every ideal has only finitely many minimal primes (see \cite{stable_prufer} and \cite{length-funct}).

In this paper, we study \emph{radical} semistar operations, i.e., stable semistar operations such that, for every ideal $I$, $1\in I^\star$ if and only if $1\in\rad(I)^\star$. This notion arose in the study of almost Dedekind domains that generalize the notion of SP-domains: indeed, it can be proved that, for the class of \emph{SP-scattered domains}, every stable operation is radical \cite{almded-radfact}. We systematize the study of this class of stable operations, showing that their set $\insrad(D)$ is a complete lattice (Theorem \ref{teor:completeness}) that is, furthermore, the join-completion of the set $\insspectral(D)$ of spectral operations inside the set $\inssemistar(D)$ of all semistar operations. For \emph{rad-colon coherent domains} (a large class of domain that includes domains with Noetherian spectrum and Pr\"ufer and coherent domains), it follows that the set $\insrad(D)$ depends uniquely on the spectrum of $D$ (in the sense that any two such domains with homeomorphic spectra have isomorphic set of radical operations; Theorem \ref{teor:insrad-iso}).

In Section \ref{sect:scattered}, we connect the study of radical operations with the use of the derived set and of scattered topological spaces (following \cite{jaff-derived,almded-radfact,PicInt}) to show that (under the hypothesis that $D$ is rad-colon coherent) the two sets $\insrad(D)$ and $\insspectral(D)$ coincide if and only if the space $\Min(I)$ of minimal ideals of $D$ is scattered for every ideal $I$. Specializing further to the case of Pr\"ufer domain, we show that this property is enough to obtain a full classification of all stable operations of $D$ by means of a standard representation (Theorem \ref{teor:prufer-MinI}), generalizing the results obtained in \cite{stable_prufer} and \cite{length-funct} for the case where each $\Min(I)$ is finite; in particular, we show that for these Pr\"ufer domains the set $\insstable(D)$ depends only on the spectrum of $D$ (as a topological space) and on which prime ideals are locally principal (Theorem \ref{teor:prufer-iso}). In particular, these results hold when the spectrum of $D$ is countable.

In Section \ref{sect:other}, we define the concepts analogue to radical semistar operations in the context of localizing systems and length functions.

\section{Preliminaries}\label{sect:prelim}
Throughout the paper, $D$ will denote an integral domain with quotient field $K$, and $\inssubmod(D)$ will denote the set of $D$-submodules of $K$. An \emph{overring} of $D$ is a ring between $D$ and $K$.

\subsection{Semistar operations}
A \emph{semistar operation} on $D$ is a map $\star:\inssubmod(D)\longrightarrow\inssubmod(D)$, $I\mapsto I^\star$, such that, for every $I,J\in\inssubmod(D)$, $x\in K$:
\begin{itemize}
\item $I\subseteq I^\star$;
\item if $I\subseteq J$, then $I^\star\subseteq J^\star$;
\item $(I^\star)^\star=I^\star$;
\item $(xI)^\star=x\cdot I^\star$.
\end{itemize}
A submodule $I$ is said to be \emph{$\star$-closed} if $I=I^\star$. The set of $\star$-closed ideals uniquely determines $\star$.

The set $\inssemistar(D)$ of the semistar operations on $D$ has a natural partial order, where $\star_1\leq\star_2$ if and only if $I^{\star_1}\subseteq I^{\star_2}$ for every $I\in\inssubmod(D)$, or equivalently if every $\star_2$-closed ideal is $\star_1$-closed. Under this order, $\inssemistar(D)$ is a complete lattice: the infimum of a family $\{\star_\alpha\}_{\alpha\in A}$ is the map
\begin{equation*}
I\mapsto\bigcap_{\alpha\in A}I^{\star_\alpha},
\end{equation*}
while its supremum is the semistar operation $\sharp$ such that a submodule $I$ is $\sharp$-closed if and only if it is $\star_\alpha$ closed for every $\alpha\in A$.

An ideal $I$ of $D$ is said to be a \emph{quasi-$\star$-ideal} if $I=I^\star\cap D$; if $I$ is a prime quasi-$\star$-ideal, we say that $I$ is a \emph{quasi-$\star$-prime}. The set of quasi-$\star$-primes is called the \emph{quasi-spectrum} of $\star$, and is denoted by $\qspec^\star(D)$.

A semistar operation $\star$ is said to be \emph{of finite type} if $I^\star=\bigcup\{J^\star\mid J\subseteq I$ is finitely generated$\}$, for every $I\in\inssubmod(D)$. It is \emph{semi-finite} (or \emph{quasi-spectral}) if every quasi-$\star$-ideal is contained in a quasi-$\star$-prime; every semistar operation of finite type is semi-finite.

A very general way to define semistar operations is through overrings: any family $\Lambda$ of overrings induces the semistar operation
\begin{equation*}
\star_\Lambda:I\mapsto\bigcap_{T\in\Lambda}IT.
\end{equation*}
When $\Lambda$ is a family of localizations of $D$, we say that $\star$ is a \emph{spectral} semistar operation. A spectral semistar operation can also be defined through a subset of the spectrum $\Spec(D)$ of $D$: given a family $\Delta\subseteq\Spec(D)$, we denote by $s_\Delta$ the semistar operation
\begin{equation*}
s_\Delta:I\mapsto\bigcap_{P\in\Delta}ID_P.
\end{equation*}
Setting $\Delta^\downarrow:=\{Q\in\Spec(D)\mid Q\subseteq P$ for some $P\in\Delta\}$, we have that $\qspec^{s_\Delta}(D)=\Delta^\downarrow$, and that $s_\Delta=s_{\Delta^\downarrow}$; moreover, $s_\Delta=s_\Lambda$ if and only if $\Delta^\downarrow=\Lambda^\downarrow$ \cite[Remark 4.5]{localizing-semistar}. A spectral operation $s_\Delta$ is of finite type if and only if $\Delta$ is compact, with respect to the Zariski topology \cite[Corollary 4.4]{topological-cons}.

A semistar operation is \emph{stable} if $(I\cap J)^\star=I^\star\cap J^\star$ for every $I,J\in\inssubmod(D)$. Every spectral semistar operation is stable, while every semi-finite stable operation is spectral \cite[Theorem 4]{anderson_overrings_1988}. There exist stable operations that are not spectral: an example is the \emph{$v$-operation} $I\mapsto(D:(D:I))$ when $D$ is a valuation domain with non-principal maximal ideal. The \emph{localizing system associated} to $\star$ is \cite[Section 2]{localizing-semistar}
\begin{equation*}
\mathcal{F}^\star:=\{I\text{~ideal of~}D\mid I^\star\cap D=D\}=\{I\text{~ideal of~}D\mid 1\in I^\star\};
\end{equation*}
this set uniquely determines $\star$, in the sense that if $\star_1,\star_2$ are stable, then $\star_1=\star_2$ if and only if $\mathcal{F}^{\star_1}=\mathcal{F}^{\star_2}$. More precisely, $\star_1\leq\star_2$ if and only if $\mathcal{F}^{\star_1}\subseteq\mathcal{F}^{\star_2}$.

The set $\insspectral(D)$ of spectral semistar operations of $D$ is closed by infimum, but not by supremum (see \cite[Example 4.5]{spettrali-eab} and Example \ref{ex:supnonrad} below); note, however, that $\insspectral(D)$ is a complete lattice (see below the discussion after Corollary \ref{cor:MinIfinito}). On the other hand, the set $\insstable(D)$ of stable operations is closed by both infima and suprema \cite[Proposition 5.3]{non-ft}.

\subsection{Topologies on the spectrum}
Let $\Spec(D)$ denote the spectrum of $D$, i.e., the set of all prime ideals of $D$. We denote by $\V(I)$ and $\D(I)$, respectively, the closed and the open sets of the Zariski topology associated to an ideal $I$; i.e., $\V(I):=\{P\in\Spec(D)\mid I\subseteq P\}$, while $\D(I):=\Spec(D)\setminus\V(I)$.

The spectrum of a ring can also be endowed with two other topologies. The \emph{inverse} topology is the topology whose subbasic open sets are those in the form $\V(I)$, as $I$ ranges among the finitely generated ideals of $D$; the \emph{constructible} topology is the topology whose subbasic open sets are the $\V(I)$ and the $\D(I)$, for $I$ ranging among the finitely generated ideals of $D$. In particular, the constructible topology is finer than both the Zariski and the inverse topology, and, furthermore, it is Hausdorff.

If $I$ is an ideal of $D$ and $\Min(I)$ denotes the set of minimal primes of $D$, the Zariski and the constructible topology agree on $\Min(I)$ (by \cite[Corollary 4.4.6(i)]{spectralspaces-libro}, applied to the spectral space $\V(I)$).

\subsection{Derived sets and scattered spaces}
Let $X$ be a topological space. A point $x\in X$ is \emph{isolated} if $\{x\}$ is an open set; the set of non-isolated points of $X$ is called the \emph{derived set} of $X$, and is denoted by $\deriv(X)$. Given an ordinal $\alpha$, we define the \emph{$\alpha$-th derived set} as
\begin{equation*}
\deriv^\alpha(X):=\begin{cases}
\deriv(\deriv^\gamma(X)) & \text{if~}\alpha=\gamma+1;\\
\bigcap_{\beta<\alpha}\deriv^\beta(X) & \text{if~}\alpha\text{~is a limit ordinal}.
\end{cases}
\end{equation*}
If $\deriv^\alpha(X)=\emptyset$ for some $\alpha$, the space $X$ is said to be \emph{scattered}. On the other hand, if $\deriv(X)=X$, then $X$ is said to be \emph{perfect}.

\section{Radical semistar operations}
\begin{defin}
We say that a semistar operation $\star$ on $D$ is \emph{quasi-radical} if, whenever $1\notin I^\star$ for some ideal $I$ of $D$, then $1\notin\rad(I)^\star$.
\end{defin}

We collect in the next few propositions the main properties of quasi-radical semistar operations.
\begin{prop}\label{prop:psrad-Dstar}
Let $D$ be an integral domain and $\star$ be a semistar operation on $D$. If $\star|_{\inssubmod(D^\star)}$ is quasi-radical as a semistar operation on $D^\star$, then $\star$ is quasi-radical.
\end{prop}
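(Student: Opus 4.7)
The plan is to reduce the statement about ideals of $D$ to the hypothesis on ideals of $D^\star$ by extending an ideal $I$ of $D$ to the ideal $ID^\star$ of $D^\star$, and then compare their radicals.

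First I would record the standard fact that $I^\star$ is always a $D^\star$-module: for $a\in I^\star$, the inclusion $aD\subseteq I^\star$ gives $aD^\star=(aD)^\star\subseteq (I^\star)^\star=I^\star$ by the fourth axiom. Consequently, for any ideal $I$ of $D$, we have $I\subseteq ID^\star\subseteq I^\star$, and applying $\star$ yields
\begin{equation*}
I^\star\subseteq (ID^\star)^\star\subseteq (I^\star)^\star=I^\star,
\end{equation*}
so $(ID^\star)^\star=I^\star$.

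Now suppose $\star|_{\inssubmod(D^\star)}$ is quasi-radical on $D^\star$, and take an ideal $I$ of $D$ with $1\notin I^\star$. Set $J:=ID^\star$, an ideal of $D^\star$. The previous step gives $1\notin J^\star$, so by the quasi-radicality hypothesis applied to $J$ we obtain $1\notin\rad_{D^\star}(J)^\star$, where $\rad_{D^\star}$ denotes the radical inside $D^\star$.

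To finish, I would observe that $\rad_D(I)\subseteq\rad_{D^\star}(ID^\star)$: if $x\in D$ and $x^n\in I$ for some $n$, then $x^n\in ID^\star$, so $x\in\rad_{D^\star}(J)$. By monotonicity of $\star$ we get $\rad_D(I)^\star\subseteq\rad_{D^\star}(J)^\star$, and since $1$ lies outside the larger set, it lies outside $\rad_D(I)^\star$. This is exactly what is required for $\star$ to be quasi-radical on $D$.

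There is no real obstacle here; the only delicate point is verifying that $(ID^\star)^\star=I^\star$, which hinges on the brief module-theoretic observation about $I^\star$ that I would insert at the beginning.
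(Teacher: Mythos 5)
Your argument is correct and follows exactly the same route as the paper's proof: extend $I$ to $ID^\star$, observe that this does not change the $\star$-closure, apply the hypothesis to $ID^\star$, and conclude via $\rad(I)\subseteq\rad(ID^\star)$. The extra explanation of why $(ID^\star)^\star = I^\star$ (using that $I^\star$ is a $D^\star$-module) is a helpful detail the paper leaves implicit, but it is not a different approach.
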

\begin{proof}
Let $I$ be an ideal of $D$ such that $1\notin I^\star$. Then, $1\notin(ID)^\star=(ID^\star)^\star$, and thus, by hypothesis, $1\notin\rad(ID^\star)^\star$. However, $\rad(I)\subseteq\rad(ID^\star)$; hence, $1\notin\rad(I)^\star$. It follows that $\star$ is quasi-radical.
\end{proof}

\begin{prop}\label{prop:pseudrad-ex}
Let $D$ be an integral domain and $\star$ be a semistar operation on $D$.
\begin{enumerate}[(a)]
\item If $\star$ is semi-finite, then it is quasi-radical.
\item If $\star$ is of finite type, then it is quasi-radical.
\item If $\star$ is induced by overrings, then it is quasi-radical.
\item If $\star$ is spectral, then it is quasi-radical.
\end{enumerate}
\end{prop}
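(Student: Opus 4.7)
The plan is to establish the four statements by treating (a) and (c) as the substantive cases and deducing (b) and (d) as immediate consequences. Throughout I use the contrapositive form of the quasi-radical condition: I must show that $1\notin I^\star$ forces $1\notin\rad(I)^\star$.

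For (a), I assume $\star$ is semi-finite and $1\notin I^\star$ for some ideal $I$. Then $I^\star\cap D$ is a proper ideal; a short check shows that $(I^\star\cap D)^\star=I^\star$, so $I^\star\cap D$ is itself a quasi-$\star$-ideal. By semi-finiteness it is contained in some quasi-$\star$-prime $P$, which in particular contains $I$. Since $P$ is prime it contains $\rad(I)$, and since $P=P^\star\cap D$ is proper, $1\notin P^\star$; monotonicity gives $\rad(I)^\star\subseteq P^\star$, hence $1\notin\rad(I)^\star$. Part (b) is then immediate from (a), because Section \ref{sect:prelim} records that every operation of finite type is semi-finite.

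For (c), I write $\star=\star_\Lambda$ for a family $\Lambda$ of overrings, so that $J^\star=\bigcap_{T\in\Lambda}JT$ for every $J\in\inssubmod(D)$. From $1\notin I^\star$ I pick $T\in\Lambda$ with $IT\subsetneq T$ and then a maximal ideal $M$ of $T$ containing $IT$. Since $M$ is prime in $T$ and $I\subseteq M$, every $x\in\rad(I)$ satisfies $x^n\in I\subseteq M$ for some $n$, hence $x\in M$; therefore $\rad(I)T\subseteq M\subsetneq T$, so $1\notin\rad(I)T$ and a fortiori $1\notin\rad(I)^\star$. Part (d) then follows from (c) by taking $\Lambda=\{D_P:P\in\Delta\}$ for any defining subset $\Delta\subseteq\Spec(D)$ of the spectral operation $s_\Delta$ (and is in any case already covered by (a), since spectral operations are semi-finite).

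None of the four pieces is genuinely difficult; the only small subtleties are remembering, in (a), that it is $I^\star\cap D$ (and not $I$ itself) whose being a quasi-$\star$-ideal is used, together with the fact that a quasi-$\star$-prime is automatically a proper ideal, and, in (c), taking $M$ maximal in the overring $T$ rather than in $D$ so that primality of $M$ in $T$ pulls the radical inside $M$. The proof is essentially a packaging of these two observations.
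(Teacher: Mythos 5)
Your proof is correct. Parts (a) and (b) match the paper's argument: you show $I^\star\cap D$ is a quasi-$\star$-ideal, invoke semi-finiteness to embed it in a quasi-$\star$-prime $P$, note $\rad(I)\subseteq P$ since $P$ is prime, and conclude; (b) then follows since finite type implies semi-finite. Where you diverge is in (c): the paper proves nothing new for (c), instead appealing to the (standard but not re-derived) fact that any semistar operation induced by overrings is semi-finite, so (c) becomes a special case of (a). You instead give a short direct argument — choose $T\in\Lambda$ with $IT\subsetneq T$, embed $IT$ in a maximal ideal $M$ of $T$, and pull $\rad(I)$ into $M$ by primality of $M$ in $T$. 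This is a bit more self-contained (it does not require knowing that operations induced by overrings are semi-finite) and is arguably cleaner; the paper's route is shorter on the page but leans on a background fact. Both then treat (d) as a corollary of (c) via $\Lambda=\{D_P:P\in\Delta\}$. No gaps.
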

\begin{proof}
Suppose $\star$ is semi-finite, and let $I$ be an ideal of $D$ such that $1\notin I^\star$. Then, $J:=I^\star\cap D$ is a quasi-$\star$-ideal such that $1\notin J^\star$. Since $\star$ is semi-finite, there is a quasi-$\star$-prime ideal $P$ containing $J$; thus, $1\notin P^\star\supseteq\rad(J)^\star\supseteq\rad(I)^\star$. Therefore, $\star$ is quasi-radical.

The next three points follows from the fact that every semistar operation of finite type is semi-finite, as well as any semistar operation induced by overrings, and that any spectral semistar operation is induced by overrings.
\end{proof}

\begin{prop}\label{prop:quasirad-inf}
Let $\{\star_\alpha\}_{\alpha\in A}$ be a set of quasi-radical semistar operations on $D$. Then, $\inf_{\alpha\in A}\star_\alpha$ is quasi-radical.
\end{prop}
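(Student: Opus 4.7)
The plan is to unwind the definition of the infimum and work contrapositively, which reduces the statement to applying the quasi-radicality hypothesis on a single $\star_\alpha$.

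Let $\star := \inf_{\alpha \in A} \star_\alpha$, so that by the preliminaries $I^\star = \bigcap_{\alpha \in A} I^{\star_\alpha}$ for every $I \in \inssubmod(D)$. In particular, $1 \in I^\star$ if and only if $1 \in I^{\star_\alpha}$ for every $\alpha \in A$. Equivalently, $1 \notin I^\star$ if and only if there exists $\alpha \in A$ such that $1 \notin I^{\star_\alpha}$.

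Now suppose $I$ is an ideal of $D$ with $1 \notin I^\star$. Pick an index $\alpha$ with $1 \notin I^{\star_\alpha}$. Since $\star_\alpha$ is quasi-radical, this gives $1 \notin \rad(I)^{\star_\alpha}$. A fortiori $1 \notin \bigcap_{\beta \in A} \rad(I)^{\star_\beta} = \rad(I)^\star$, so $\star$ is quasi-radical.

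There is no real obstacle here: the content lies entirely in unwrapping the pointwise description of the infimum provided in Section~\ref{sect:prelim}, together with the observation that the condition $1 \notin (-)^\star$ is preserved under taking the intersection defining the infimum, so the hypothesis applied to a single $\star_\alpha$ suffices. No separate argument for the other semistar axioms is needed, since it is already known that $\inssemistar(D)$ is closed under infima.
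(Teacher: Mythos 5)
Your proof is correct and follows exactly the same route as the paper's: express the infimum pointwise as an intersection, pick an index $\beta$ with $1\notin I^{\star_\beta}$, apply quasi-radicality of $\star_\beta$, and conclude $1\notin\rad(I)^\star$. No differences worth noting.
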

\begin{proof}
Let $\star:=\inf_{\alpha\in A}\star_\alpha$, and let $I$ be an ideal of $D$ such that $1\notin I^\star$. Since $I^\star=\bigcap_{\alpha\in A} I^{\star_\alpha}$, it follows that there is a $\beta\in A$ such that $1\notin I^{\star_\beta}$. Since $\star_\beta$ is quasi-radical, then $1\notin\rad(I)^{\star_\beta}$, and thus also $1\notin\rad(I)^\star$. Hence $\star$ is quasi-radical.
\end{proof}

The previous proposition does not extend to the supremum of a family of quasi-radical operations, as the next example shows.
\begin{ex}
Let $D$ be a Pr\"ufer domain of dimension $1$ such that $\Max(D)=\{P,Q_0,Q_1,\ldots,Q_n,\ldots,\}$ is countable and with a single non-isolated point, $P$; suppose also that $D_P$ is not discrete. For every $n\inN$, let $T_n:=\bigcap_{i\geq n}D_{Q_i}$; then, $T_n$ is a Pr\"ufer domain whose maximal ideals are the extensions of $Q_i$ (for $i\geq n$) and of $P$; in particular, $\bigcup_nT_n=D_P$.

Recall that a \emph{fractional ideal} of a domain $T$ is an $I\in\inssubmod(T)$ such that $dI\subseteq T$ for some $d\in K$, $d\neq 0$. For every $n$, let $\sharp_n$ and $\star_n$ be the semistar operations defined by
\begin{equation*}
I^{\sharp_n}:=\begin{cases}
IT_n & \text{if~}IT_n\text{~is a fractional ideal over~}T_n\\
K & \text{otherwise},
\end{cases}
\end{equation*}
and
\begin{equation*}
I^{\star_n}:=I^{\sharp_n}\cap(ID_P)^{v_P},
\end{equation*}
where $v_P$ is the $v$-operation on $D_P$. Since $T_n\subseteq D_P$, for every ideal $I$ of $D$ we have $I^{\star_n}=I^{\sharp_n}$: hence, if $1\notin I^{\star_n}$ then $IT_n\neq T_n$ and so $\rad(I)T_n\neq T_n$. Thus, every $\star_n$ is quasi-radical.

Let now $\star$ be the supremum of all $\star_n$. Then, $D_P^\star=D_P$ since $D_P^{\star_n}\subseteq(D_P)^{v_P}=D_P$. Moreover, if $t\in D_P$, then $t\in T_n$ for some $n$, and thus $t\in D^{\star_n}\subseteq D^\star$. Hence, $D^\star=D_P$. For every $n$, $PD_P$ is not a fractional ideal over $T_n$, and thus
\begin{equation*}
(PD_P)^{\star_n}=K\cap(PD_P)^{v_P}=K\cap D_P=D_P.
\end{equation*}
Hence,
\begin{equation*}
P^\star=(PD)^\star=(PD^\star)^\star=(PD_P)^\star=D_P.
\end{equation*}
On the other hand, if $L\neq P$ is a $P$-primary ideal, then $(LD_P)^{v_P}=LD_P$; hence, $LD_P$ is $\star$-closed and thus $L^\star\subseteq LD_P\cap D$, so that $1\notin L^\star$ while $1\in P^\star=\rad(L)^\star$. Therefore, $\star$ is not quasi-radical.
\end{ex}

The main problem of the previous example is that the restriction of a quasi-radical operation on $D$ to an overring of $D$ is not quasi-radical (as it happens for $\star_i|_{\inssubmod(D_P)}$); this in turn is due to the fact that the property of being quasi-radical depends only on the ideals of $D$, rather than on all $D$-submodules of $K$. For this reason, we are only interested in the following subclass of semistar operations.

\begin{defin}
We say that a semistar operation $\star$ on $D$ is \emph{radical} if it is quasi-radical and stable.
\end{defin}

\begin{lemma}\label{lemma:overring-radical}
Let $\star$ be a radical stable operation, and suppose that $T$ is an overring of $D$. Then $\star|_{\inssubmod(T)}$ is radical.
\end{lemma}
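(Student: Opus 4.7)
The plan is to check, in order, that (i) $\star' := \star|_{\inssubmod(T)}$ really is a semistar operation on $T$, i.e.\ takes $T$-submodules to $T$-submodules; (ii) it is stable; and (iii) it is quasi-radical.

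For (i) I would invoke the standard representation of a stable operation via its localizing system $\mathcal{F}^\star$, namely $L^\star=\bigcup\{(L:_K H)\mid H\in\mathcal{F}^\star\}$ for every $L\in\inssubmod(D)$. If $L$ is a $T$-submodule and $x\in L^\star$, then $xH\subseteq L$ for some $H\in\mathcal{F}^\star$, and for each $t\in T$ we have $(tx)H=t(xH)\subseteq tL\subseteq L$, so $tx\in L^\star$; hence $L^\star\in\inssubmod(T)$. The four semistar axioms transfer automatically. Claim (ii) is then immediate from $\inssubmod(T)\subseteq\inssubmod(D)$: for $I,J\in\inssubmod(T)$, $(I\cap J)^{\star'}=(I\cap J)^\star=I^\star\cap J^\star=I^{\star'}\cap J^{\star'}$.

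The real content is (iii), and the idea is to bounce the problem between $T$ and $D$ by two applications of stability. Take an ideal $I$ of $T$ with $1\notin I^{\star'}=I^\star$. Stability of $\star$ gives $(I\cap D)^\star=I^\star\cap D^\star$, and since $1\in D\subseteq D^\star$, we must have $1\notin (I\cap D)^\star$. As $\star$ is quasi-radical on $D$, applying the definition to the ideal $I\cap D$ of $D$ yields $1\notin\rad(I\cap D)^\star$. An elementary check shows $\rad_T(I)\cap D=\rad(I\cap D)$: a $d\in D$ satisfies $d^n\in I$ for some $n$ iff $d^n\in I\cap D$ for some $n$. Applying stability a second time, $\rad_T(I)^\star\cap D^\star=(\rad_T(I)\cap D)^\star=\rad(I\cap D)^\star$, which does not contain $1$; since $1\in D^\star$, this forces $1\notin\rad_T(I)^\star$, as required.

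I do not foresee any serious obstacle. The only input beyond the definitions and stability is the localizing-system formula $L^\star=\bigcup_{H\in\mathcal{F}^\star}(L:_K H)$, which is the standard dictionary between stable operations and localizing systems already alluded to in the preliminaries; everything else hinges on the pivot identity $\rad_T(I)\cap D=\rad(I\cap D)$ together with the two instances of $(A\cap B)^\star=A^\star\cap B^\star$.
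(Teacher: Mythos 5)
Your argument is correct and the core of part (iii)—passing to $I\cap D$, invoking quasi-radicality on $D$, and using the pivot identity $\rad_T(I)\cap D=\rad(I\cap D)$ together with stability to conclude $1\notin\rad_T(I)^\star$—is exactly the paper's proof. The only differences are cosmetic: you explicitly verify (via the localizing-system formula) that the restriction is a semistar operation on $T$ and that it is stable, which the paper takes as given, and in the step from $1\notin I^\star$ to $1\notin(I\cap D)^\star$ you invoke stability where plain monotonicity already suffices.
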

\begin{proof}
Let $I$ be a $T$-ideal such that $1\notin I^\star$. Then, $1\notin(I\cap D)^\star$, and since $\star$ is radical we have $1\notin(\rad(I\cap D))^\star$. However, $\rad(I\cap D)=\rad(I)\cap D$; hence
\begin{equation*}
1\notin\rad(I\cap D)^\star=(\rad(I)\cap D)^\star=\rad(I)^\star\cap D^\star.
\end{equation*}
Thus $1\notin\rad(I)^\star$ and so $\star|_{\inssubmod(T)}$ is radical, as claimed.
\end{proof}

\begin{prop}\label{prop:radJchiuso}
Let $D$ be an integral domain and let $\star$ be a radical semistar operation on $D$ such that $D=D^\star$. Let $J$ be an ideal of $D$ such that $J=J^\star$. Then, $\rad(J)^\star=\rad(J)$.
\end{prop}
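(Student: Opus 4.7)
Plan: The goal is to prove $\rad(J)^\star\subseteq\rad(J)$ (the reverse inclusion is immediate). Since $\rad(J)\subseteq D=D^\star$ forces $\rad(J)^\star\subseteq D$, it suffices to show that any $a\in D\cap\rad(J)^\star$ satisfies $a^n\in J$ for some $n\ge1$. I would work throughout with the localizing system $\mathcal F^\star$, using the characterization of closure valid for any stable $\star$: $x\in I^\star$ iff $(I:_Dx)\in\mathcal F^\star$, where $(I:_Dx)=\{d\in D:dx\in I\}$.

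The first computation is the identity $(\rad(J):_Da)=\rad\bigl((J:_Da^\infty)\bigr)$, where $(J:_Da^\infty):=\bigcup_n(J:_Da^n)$: indeed $d\in(\rad(J):_Da)$ means $(da)^n=d^na^n\in J$ for some $n$, i.e.\ $d^n\in(J:_Da^n)\subseteq(J:_Da^\infty)$, and conversely $d^n\in(J:_Da^m)$ gives $(da)^{\max(n,m)}\in J$. Combining with the stability characterization, $a\in\rad(J)^\star$ translates to $\rad\bigl((J:_Da^\infty)\bigr)\in\mathcal F^\star$. The radical hypothesis on $\star$ amounts to the equivalence $I\in\mathcal F^\star\iff\rad(I)\in\mathcal F^\star$ (the forward direction from upward closure of $\mathcal F^\star$, the backward direction being the contrapositive of quasi-radicality), from which we conclude $(J:_Da^\infty)\in\mathcal F^\star$.

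The second ingredient is that each $(J:_Da^n)$ is itself $\star$-closed. Using $((J:_Da^n):_Dd)=(J:_Dda^n)$ together with $J=J^\star$, one finds for $d\in D$ that $d\in(J:_Da^n)^\star$ iff $(J:_Dda^n)\in\mathcal F^\star$ iff $da^n\in J^\star=J$ iff $d\in(J:_Da^n)$. Moreover, a proper $\star$-closed ideal $L\subsetneq D$ cannot lie in $\mathcal F^\star$ (else $1\in L^\star=L$ forces $L=D$). Hence, if $a^n\notin J$ for every $n$, each $(J:_Da^n)$ is a proper $\star$-closed ideal outside $\mathcal F^\star$, while their ascending union belongs to $\mathcal F^\star$.

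The main obstacle is to extract a contradiction from this configuration: an ascending union of non-members of $\mathcal F^\star$ can in general still lie in $\mathcal F^\star$, a pathology avoided only under a finite-type hypothesis that is not available here. The resolution must genuinely use radicality: I would exploit the recursion $a\cdot(J:_Da^{n+1})\subseteq(J:_Da^n)$ and the equivalence $I\in\mathcal F^\star\iff\rad(I)\in\mathcal F^\star$ to show that $(J:_Da^\infty)$ is itself $\star$-closed. Once this is established, a $\star$-closed ideal that lies in $\mathcal F^\star$ must equal $D$, so $a^n\in J$ for some $n$, and the proof is complete.
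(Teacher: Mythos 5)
Your plan follows the paper's strategy up to a point: both arguments pass to the localizing system $\mathcal{F}^\star$, both compute that $(\rad(J):_Da)=\rad\bigl((J:_Da^\infty)\bigr)$ (the paper phrases this as $s^{-1}\rad(J)\cap D$ versus $\rad(s^{-n}J\cap D)$), and both invoke the radical hypothesis at exactly this stage. The problem is that the concluding step is not supplied. You correctly observe that $(J:_Da^\infty)\in\mathcal{F}^\star$ does not by itself give a contradiction, and you propose to close the gap by proving that $(J:_Da^\infty)=\bigcup_n(J:_Da^n)$ is $\star$-closed; but this is only stated as an intention, and the suggested ingredients (the recursion $a\cdot(J:_Da^{n+1})\subseteq(J:_Da^n)$ plus radicality) do not obviously achieve it. Indeed, to show $(J:_Da^\infty)$ is $\star$-closed one must argue, for $z\in D$ with $za^n\notin J$ for all $n$, that $\bigl((J:_Da^\infty):_Dz\bigr)=\bigcup_n(J:_Dza^n)\notin\mathcal{F}^\star$ --- but this is again an ascending union of proper $\star$-closed ideals, so the same difficulty recurs rather than resolves. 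As written, the proposal has a genuine gap here.

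The paper's own proof does not attempt to show the union $(J:_Ds^\infty)$ is $\star$-closed. Instead it argues that $(\rad(J):_Ds)=s^{-1}\rad(J)\cap D\subseteq\rad\bigl((J:_Ds^n)\bigr)$, so that $1\in(s^{-1}\rad(J)\cap D)^\star\subseteq\rad\bigl((J:_Ds^n)\bigr)^\star$, and then applies the radical hypothesis to the \emph{single} colon ideal $(J:_Ds^n)$ (which you have already noted is automatically $\star$-closed), concluding $1\in(J:_Ds^n)^\star=(J:_Ds^n)$ and hence $s^n\in J^\star=J$. The decisive move is therefore to apply radicality to one fixed $(J:_Ds^n)$ rather than to the union $(J:_Ds^\infty)$; this is precisely the step your proposal defers without supplying.
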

\begin{proof}
Let $s\in\rad(J)^\star$, and let $t\in s^{-1}\rad(J)\cap D$. Then, $st\in\rad(J)$, and thus there is an $n$ such that $s^nt^n\in J$, i.e., $t^n\in s^{-n}J\cap D$. Hence $t\in\rad(s^{-n}J\cap D)$ and so $s^{-1}\rad(J)\cap D\subseteq\rad(s^{-n}J\cap D)$.

Since $s\in\rad(J)^\star$, we have $1\in s^{-1}\rad(J)^\star$; hence also $1\in \rad(s^{-n}J\cap D)^\star$. Since $\star$ is radical, it follows that $1\in (s^{-n}J\cap D)^\star$; thus $1\in s^{-n}J^\star$ and $s^n\in J^\star=J$. Therefore, $s\in\rad(J)$, and $\rad(J)^\star=\rad(J)$.
\end{proof}

\begin{teor}\label{teor:radical-compllattice}
Let $D$ be an integral domain. Then, the set $\insrad(D)$ of radical stable semistar operations is a complete sublattice of $\inssemistar(D)$.
\end{teor}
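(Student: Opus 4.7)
The strategy is to verify that $\insrad(D)$ is closed under arbitrary infima and suprema in $\inssemistar(D)$, using the fact recalled in Section~\ref{sect:prelim} that $\insstable(D)$ is itself a complete sublattice of $\inssemistar(D)$. The infimum case is immediate: given $\{\star_\alpha\}\subseteq\insrad(D)$, the infimum in $\inssemistar(D)$ is stable (as a stable infimum) and quasi-radical (by Proposition~\ref{prop:quasirad-inf}), hence radical.

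For the supremum, let $\{\star_\alpha\}_{\alpha\in A}\subseteq\insrad(D)$ and $\star:=\sup_\alpha\star_\alpha$, which is automatically stable. To show $\star$ is quasi-radical I would first invoke Proposition~\ref{prop:psrad-Dstar} to reduce to the case $D=D^\star$; this is legitimate because $\star|_{\inssubmod(D^\star)}$ coincides with the supremum of the $\star_\alpha|_{\inssubmod(D^\star)}$ in $\inssemistar(D^\star)$ --- a $D^\star$-submodule is closed under $\star|_{\inssubmod(D^\star)}$ exactly when it is closed under every $\star_\alpha|_{\inssubmod(D^\star)}$ --- and each $\star_\alpha|_{\inssubmod(D^\star)}$ is still radical by Lemma~\ref{lemma:overring-radical}. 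Once $D=D^\star$, the squeeze $D\subseteq D^{\star_\alpha}\subseteq D^\star=D$ forces $D=D^{\star_\alpha}$ for every $\alpha$, so Proposition~\ref{prop:radJchiuso} applies to each $\star_\alpha$ individually.

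The key step is then the following: if $J$ is a $\star$-closed ideal of $D$, it is $\star_\alpha$-closed for every $\alpha$ (by the supremum characterization of $\star$-closed submodules), so Proposition~\ref{prop:radJchiuso} yields $\rad(J)^{\star_\alpha}=\rad(J)$ for every $\alpha$, which makes $\rad(J)$ itself $\star$-closed. To conclude quasi-radicality, suppose $1\in\rad(I)^\star$ for an ideal $I$ of $D$ and set $J:=I^\star\cap D$, which is $\star$-closed by stability. Then $\rad(I)\subseteq\rad(J)$ gives $1\in\rad(J)^\star=\rad(J)$, forcing $\rad(J)=D$ and hence $J=D$, i.e., $1\in I^\star$. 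The main obstacle is precisely that Proposition~\ref{prop:quasirad-inf} has no analogue for suprema (as the example following it shows), so stability must be used essentially; the leverage comes from the supremum-closure characterization of $\star$-closed submodules, which allows Proposition~\ref{prop:radJchiuso} to be combined across the whole family at once.
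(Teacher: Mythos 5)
Your proof is correct and follows essentially the same route as the paper's: reduce to the case $D=D^\star$ via Proposition~\ref{prop:psrad-Dstar} and Lemma~\ref{lemma:overring-radical}, then apply Proposition~\ref{prop:radJchiuso} to each $\star_\alpha$ and use the fact that a submodule is $\star$-closed precisely when it is $\star_\alpha$-closed for every $\alpha$. The only differences are cosmetic (you argue the contrapositive and you spell out explicitly why the reduction to $D^\star$ is legitimate, which the paper leaves implicit).
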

\begin{proof}
Let $\{\star_\alpha\}_{\alpha\in A}$ be a family of radical semistar operations. Then, its infimum is quasi-radical by Proposition \ref{prop:quasirad-inf} and stable since every $\star_\alpha$ is stable, and thus $\insrad(D)$ is closed by infima. Let $\star$ be the supremum of $\{\star_\alpha\}_{\alpha\in A}$.

Let $T:=D^\star$: then, $T$ is $\star_\alpha$-closed for every $\alpha$. By Proposition \ref{prop:psrad-Dstar}, it suffices to show that $\star|_{\inssubmod(T)}$ is radical; furthermore, by Lemma \ref{lemma:overring-radical}, each $\star_\alpha|_{\inssubmod(T)}$ is radical. Therefore, without loss of generality, we can actually suppose that $T=D$, i.e., that $D$ is $\star_\alpha$-closed for every $\alpha$.

Let $J$ be an ideal of $D$ such that $1\notin J^\star$. Let $L:=J^\star$; then, $L$ is an ideal of $D$ that is $\star_\alpha$-closed for every $\alpha$, and thus by Proposition \ref{prop:radJchiuso} also $\rad(L)$ is $\star_\alpha$-closed for every $\alpha$; thus, $\rad(L)=\rad(L)^\star$. In particular, $1\notin\rad(L)^\star$; the claim now follows from the fact that $\rad(J)\subseteq\rad(L)$.
\end{proof}

\section{Radical operations as a completion}
By Proposition \ref{prop:pseudrad-ex}, each spectral semistar operation $s_\Delta$ is radical; in this section, we explore the link between these two classes of semistar operations. Following \cite[Example 4.5]{spettrali-eab}, we first give an example of a radical operation that is not spectral.
\begin{ex}\label{ex:supnonrad}
Let $\ins{A}$ be the ring of all algebraic integer, i.e., the integral closure of $\insZ$ in $\overline{\insQ}$. Then, $\ins{A}$ is a B\'ezout domain (every finitely generated ideal is principal) and, for every maximal ideal $P$, we have that $\ins{A}=\bigcap\{\ins{A}_Q\mid Q\in\Max(\ins{A})\setminus\{P\}\}$. Hence, for each $P$ the spectral operation $\sharp(P):=s_{\Max(\ins{A})\setminus\{P\}}$ closes $\ins{A}$, and thus the supremum $\star$ of all the $\sharp(P)$ closes $\ins{A}$ too, and thus it closes every principal ideal (since $(x\ins{A})^\star=x\cdot\ins{A}^\star=x\cdot\ins{A}$).

As the supremum of a family of radical operations, $\star$ is itself radical. However, for every $P$-primary ideal $Q$, we have $Q^{\sharp(P)}=\ins{A}$; therefore, $\qspec^\star(D)$ contains only the zero ideal. In particular, were $\star$ spectral, it should be equal to $s_{(0)}$, and in particular $1$ would belong to $I^\star$ for every nonzero ideal $\star$, contradicting the fact that principal ideals are closed. Hence $\star$ is radical, but not spectral.
\end{ex}

The following proposition characterizes which radical operations are spectral.
\begin{prop}\label{prop:caratt-spectral}
Let $\star$ be a radical stable operation on $D$. Then, $\star$ is spectral if and only if, for every radical ideal $I$,
\begin{equation*}
I^\star\cap D=\bigcap\{P\mid P\in \V(I)\cap\qspec^\star(D)\}.
\end{equation*}
\end{prop}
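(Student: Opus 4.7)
My plan is to treat the two directions separately. Both rely on the fact that a stable operation is determined by its localizing system $\mathcal{F}^\star$ and on the identification $\qspec^{s_\Delta}(D)=\Delta^\downarrow$ recalled in the preliminaries.

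For the ``only if'' direction, I assume $\star=s_\Delta$. After replacing $\Delta$ by $\Delta^\downarrow$, I may take $\Delta=\qspec^\star(D)$. Then $I^\star\cap D=\bigcap_{P\in\Delta}(ID_P\cap D)$, and the task reduces to computing $ID_P\cap D$ for a single $P\in\Delta$ when $I$ is radical. The key point is that $ID_P$ is itself radical in $D_P$: a direct element-wise computation shows $\rad(ID_P)=\rad(I)D_P$, so if $I=\rad(I)$ then $ID_P=\rad(ID_P)$. Consequently $ID_P$ equals the intersection of the primes of $D_P$ that contain it, and these primes correspond bijectively to primes $P'\in\V(I)$ with $P'\subseteq P$. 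Contracting back yields $ID_P\cap D=\bigcap\{P'\in\V(I)\mid P'\subseteq P\}$, with the empty intersection giving $D$ when $I\not\subseteq P$. Intersecting over $P\in\Delta$ and using $\Delta=\Delta^\downarrow$, the primes $P'$ that appear are exactly those in $\V(I)\cap\qspec^\star(D)$, which gives the claimed formula.

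For the ``if'' direction, I set $\sharp:=s_{\qspec^\star(D)}$ and compare $\star$ with $\sharp$ through their localizing systems; since both are stable, $\star=\sharp$ iff $\mathcal{F}^\star=\mathcal{F}^\sharp$. The inclusion $\mathcal{F}^\star\subseteq\mathcal{F}^\sharp$ is immediate: if $1\in J^\star$ and $J\subseteq P\in\qspec^\star(D)$, then $1\in P^\star\cap D=P$, contradicting that $P$ is proper. For the reverse inclusion, let $J$ satisfy $J\not\subseteq P$ for every $P\in\qspec^\star(D)$; then $\V(\rad(J))\cap\qspec^\star(D)=\V(J)\cap\qspec^\star(D)=\emptyset$, and the hypothesis applied to the radical ideal $\rad(J)$ gives
\begin{equation*}
\rad(J)^\star\cap D=\bigcap\{P\mid P\in\V(\rad(J))\cap\qspec^\star(D)\}=D,
\end{equation*}
so $1\in\rad(J)^\star$. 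Because $\star$ is quasi-radical, this forces $1\in J^\star$, i.e.\ $J\in\mathcal{F}^\star$.

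The main obstacle lies in the ``only if'' direction: one must work carefully in the possibly non-Noetherian setting to confirm that $ID_P$ really is radical and to organize the double intersection so that, when summed over $P\in\Delta$, the family of primes that appears is precisely $\V(I)\cap\qspec^\star(D)$. The ``if'' direction is essentially mechanical once things are phrased in terms of localizing systems, and it uses the quasi-radical hypothesis only at the single crucial step of replacing $\rad(J)$ by $J$.
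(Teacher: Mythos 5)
Your proof is correct and the forward direction coincides with the paper's: both compute $I^{s_\Delta}\cap D$ for a radical ideal $I$ by noting that $ID_P$ is radical in $D_P$ and that, since $\Delta=\Delta^\downarrow$, the primes surviving the double intersection are exactly those in $\V(I)\cap\Delta$. In the backward direction you organize things a bit differently, and somewhat more cleanly: setting $\sharp=s_{\qspec^\star(D)}$, you compare localizing systems directly, obtaining $\mathcal{F}^\star\subseteq\mathcal{F}^\sharp$ by the elementary observation that $1\in J^\star$ forces $J\nsubseteq P$ for every quasi-$\star$-prime $P$, and $\mathcal{F}^\sharp\subseteq\mathcal{F}^\star$ by noting that for $J\in\mathcal{F}^\sharp$ the intersection $\bigcap\{P\mid P\in\V(\rad J)\cap\qspec^\star(D)\}$ is empty, so the hypothesis gives $1\in\rad(J)^\star$, whence $1\in J^\star$ by quasi-radicality. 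The paper instead first shows $\star\leq s_\Delta$ by invoking that $PD_P$ is $\star$-closed (hence $\star$ restricts to the identity on $\inssubmod(D_P)$) for each $P\in\qspec^\star(D)$, and then argues by contradiction: picking $x\in I^{s_\Delta}\setminus I^\star$ and $J=(I:_D x)$, passing to $\rad(J)$, and comparing the formulas for $\rad(J)^\star\cap D$ and $\rad(J)^{s_\Delta}\cap D$. Your version avoids the structural fact about $D_P$-submodules and is slightly more self-contained, but both arguments rest on the same two ideas: the formula for $I^{s_\Delta}\cap D$ on radical ideals, and quasi-radicality to reduce from $J$ to $\rad(J)$.
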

\begin{proof}
Suppose first that $\star$ is spectral, say $\star=s_\Delta$ with $\Delta=\Delta^\downarrow$. For every $P\in\Delta$, the ideal $ID_P$ is radical, and its minimal primes are the minimal primes of $I$ contained in $P$; all of them belong to $\Delta$, and thus they are all in $\V(I)\cap\qspec^\star(D)$. Hence,
\begin{equation*}
I^\star\cap D=\bigcap_{P\in\Delta}\{QD_P\cap D\mid Q\in\Min(I),Q\subseteq P\}=\bigcap_{Q\in\Min(I)\cap\Delta}Q.
\end{equation*}
The claim follows.

Conversely, suppose that the equality holds, and let $\Delta:=\qspec^\star(D)$. For every $P\in\Delta$, $PD_P$ is $\star$-closed, and thus $\star$ is the identity on $\inssubmod(D_P)$; it follows that $I^\star\subseteq ID_P$ for every $P\in\Delta$, and thus $\star\leq s_\Delta$.

Suppose that $\star<s_\Delta$: then, there is an ideal $I$ of $D$ such that $I^\star\subsetneq I^{s_\Delta}$. Let $x\in I^{s_\Delta}\setminus I^\star$ and let $J:=(I:_Dx)$. Since $\star$ is stable, we have $1\in J^{s_\Delta}$ while $1\notin J^\star$; since both $s_\Delta$ and $\star$ are radical, it follows that $1\in\rad(J)^{s_\Delta}$ while $1\notin\rad(J)^\star$. However, by the hypothesis and the first part of the proof, $\rad(J)^{s_\Delta}\cap D=\rad(J)^\star\cap D$; this is a contradiction, and thus $\star$ must be equal to $s_\Delta$. In particular, $\star$ is spectral, as claimed.
\end{proof}

\begin{cor}\label{cor:MinIfinito}
Let $D$ be an integral domain such that every ideal has only finitely many minimal primes. Then, every radical stable operation is spectral.
\end{cor}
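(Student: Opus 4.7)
My plan is to show that $\star$ is semi-finite, which combined with stability yields spectrality by the criterion recalled in the Preliminaries (every semi-finite stable operation is spectral). So I need to produce, for each proper quasi-$\star$-ideal $J$, a quasi-$\star$-prime containing $J$. The finite-min-primes hypothesis lets me pass to $\rad(J) = P_1 \cap \cdots \cap P_n$ for its minimal primes (finitely many); radicality of $\star$ then gives $1 \notin \rad(J)^\star$, stability gives $\rad(J)^\star = P_1^\star \cap \cdots \cap P_n^\star$, and hence some index $j$ must satisfy $1 \notin P_j^\star$.

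The core technical step is a lemma: for any stable $\star$ and any prime $P$ of $D$ with $1 \notin P^\star$, the ideal $Q := P^\star \cap D$ is itself prime. Indeed, if $a, b \in D$ with $ab \in Q$ and $a \notin Q$, then $a \notin P$ (as $P \subseteq Q$), so primality of $P$ gives $(P : a) = P$ and hence $aD \cap P = aP$. Applying $\star$ and invoking stability,
\begin{equation*}
aP^\star = (aP)^\star = (aD \cap P)^\star = (aD)^\star \cap P^\star = aD^\star \cap P^\star,
\end{equation*}
and since $ab \in aD \cap P^\star \subseteq aD^\star \cap P^\star = aP^\star$, there is $c \in P^\star$ with $ab = ac$. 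As $D$ is a domain and $a \neq 0$, this forces $b = c \in P^\star$, i.e.\ $b \in Q$, proving primality.

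Applying the lemma to $P_j$, the ideal $P_j^\star \cap D$ is quasi-$\star$-closed (immediate from stability), proper (since $1 \notin P_j^\star$), and prime (by the lemma); hence it is a quasi-$\star$-prime, and it contains $P_j \supseteq \rad(J) \supseteq J$. This produces the desired quasi-$\star$-prime over $J$ and finishes the proof. The primality of $P^\star \cap D$ is the only non-formal step I anticipate; everything else is straightforward assembly of stability, radicality, and the finiteness hypothesis. (Alternatively, one could verify the identity of Proposition \ref{prop:caratt-spectral} directly on radical ideals, using the same primality lemma plus the observation that any quasi-$\star$-prime containing a finite intersection $P_1 \cap \cdots \cap P_n$ must contain some $P_i$.)
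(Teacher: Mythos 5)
Your proof is correct, but it takes a genuinely different route from the paper's. The paper verifies the criterion of Proposition~\ref{prop:caratt-spectral} directly: it writes a radical ideal $I$ as the finite intersection of its minimal primes $P_1,\ldots,P_n$, distributes $\star$ over the intersection by stability, and then cites the lemma from the Pr\"ufer-stable paper that $P_i^\star\cap D\in\{P_i,D\}$, which immediately gives $I^\star\cap D$ as the intersection of the quasi-$\star$-prime minimal primes. You instead prove that $\star$ is semi-finite and invoke the classical fact (already recalled in the Preliminaries, \cite[Theorem 4]{anderson_overrings_1988}) that semi-finite stable operations are spectral; for this you only need that $P_j^\star\cap D$ is prime, a weaker conclusion than the cited lemma, and you supply a clean self-contained argument for it (the computation $aP^\star = (aD\cap P)^\star = aD^\star\cap P^\star$, then cancellation of $a$, is exactly right). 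Both proofs rest on the same two ingredients --- the finite minimal-prime decomposition of $\rad(J)$ and stability distributing over that finite intersection --- but your framing bypasses Proposition~\ref{prop:caratt-spectral} entirely, making the corollary essentially independent of the rest of Section~4, at the cost of a short ad hoc primality lemma (which the paper outsources to an external reference). Your parenthetical alternative at the end is, in effect, the paper's argument.
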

\begin{proof}
Let $I$ be a radical ideal and $P_1,\ldots,P_n$ be its minimal primes. Then, $I=P_1\cap\cdots\cap P_n$, and thus $I^\star=P_1^\star\cap\cdots\cap P_n^\star$. Since $\star$ is stable, for each $i$ the ideal $P_i^\star\cap D$ is either equal to $P_i$ or to $D$ \cite[Lemma 3.1]{stable_prufer} hence, $I^\star\cap D$ is equal to the intersection of the minimal primes that are quasi-$\star$-ideals. By Proposition \ref{prop:caratt-spectral}, $\star$ is spectral.
\end{proof}

The following result is a variant of \cite[Lemma 3.1]{stable_prufer}.
\begin{prop}\label{prop:chius-rad}
Let $\star$ be a stable semistar operation, and let $J$ be a radical ideal of $D$. Then, $J^\star\cap D$ is either $D$ or a radical ideal.
\end{prop}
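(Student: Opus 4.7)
The plan is to reduce the claim to the localizing system $\mathcal{F}^\star=\{H\mid 1\in H^\star\}$ and then exploit the hypothesis that $J$ is radical to identify $(J:_Dx^n)$ with $(J:_Dx)$. Set $L:=J^\star\cap D$, and assume $L\neq D$; what remains is to show $\rad(L)\subseteq L$, since $L$ is automatically an ideal.

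The first step is the following reformulation, which is a direct consequence of stability: for every ideal $I$ of $D$ and every $x\in D$,
\[
x\in I^\star\iff (I:_Dx)\in\mathcal{F}^\star.
\]
To see this, write $(I:_Dx)=D\cap x^{-1}I$ inside $K$, apply stability to obtain $(I:_Dx)^\star=D^\star\cap x^{-1}I^\star$, and note that $1\in D\subseteq D^\star$ automatically, so the condition $1\in(I:_Dx)^\star$ collapses to $1\in x^{-1}I^\star$, i.e., $x\in I^\star$.

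Now take $x\in\rad(L)$. Then $x\in D$ and $x^n\in L\subseteq J^\star$ for some $n\geq1$, so by the reformulation $(J:_Dx^n)\in\mathcal{F}^\star$. The key observation is that $J$ being radical forces $(J:_Dx^n)=(J:_Dx)$: the inclusion $\supseteq$ is immediate, and conversely if $dx^n\in J$ then
\[
(dx)^n=d^{n-1}(dx^n)\in J,
\]
whence $dx\in J$ by radicality of $J$, so $d\in(J:_Dx)$. Therefore $(J:_Dx)\in\mathcal{F}^\star$, and a second application of the reformulation gives $x\in J^\star$, hence $x\in J^\star\cap D=L$, as required.

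The only delicate point is the stability computation underlying the reformulation; once one thinks to compare the colon ideals of $J$ with $x$ versus $x^n$, the radical-ideal manipulation is elementary. Note that the radicality hypothesis on $\star$ itself is never used, only stability; this matches the statement, which concerns arbitrary stable $\star$.
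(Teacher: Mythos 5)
Your proof is correct and follows essentially the same route as the paper: both reduce to the identity $(J:_D x^n)=(J:_D x)$ for $x\in D$ and $J$ radical, then close via stability applied to the colon ideal. Your direct computation $(dx)^n=d^{n-1}(dx^n)$ is a slightly cleaner way to get this identity than the paper's detour through $\rad(x^{-n}J\cap D)$ and the $\max\{n,k\}$ device, and phrasing it via $\mathcal{F}^\star$ is a harmless repackaging of the same stability step.
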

\begin{proof}
Suppose $J^\star\cap D\neq D$. Let $s\in D$ be such that $s^n\in J^\star$ for some integer $n$. Let $L:=s^{-n}J\cap D$: since $\star$ is stable, $1\in L^\star$. We claim that $s^{-1}J\cap D=\rad(L)$. Indeed, if $x\in s^{-1}J\cap D$ then $sx\in J$ and thus also $s^nx\in J$, i.e., $x\in s^{-n}J\cap D=L\subseteq\rad(L)$. On the other hand, if $x\in\rad(L)$, then $x^k\in s^{-n}J$ for some $k$, and thus $x^ks^n\in J$. Since $x,s\in D$, we have $x^Ns^N\in J$, where $N:=\max\{n,k\}$; since $J$ is radical, it follows that $xs\in J$, that is, $x\in s^{-1}J\cap D$. Thus $s^{-1}J\cap D=L=\rad(L)$.

Since $1\in L^\star$, it follows that $1\in(s^{-1}J)^\star=s^{-1}J^\star$, that is, $s\in J^\star$. Hence $J^\star$ is radical, as claimed. 
\end{proof}

\begin{prop}\label{prop:dense-closed}
Let $D$ be a domain, let $I$ be a radical ideal of $D$ and $\Delta=\Delta^\downarrow\subseteq\Spec(D)$. Then, the following are equivalent:
\begin{enumerate}[(i)]
\item $I=I^{s_\Delta}\cap D$;
\item $\V(I)\cap\Delta$ is dense in $\V(I)$;
\item $\Min(I)\cap\Delta$ is dense in $\Min(I)$.
\end{enumerate}
\end{prop}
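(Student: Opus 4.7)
The plan is to introduce the auxiliary ideal $J := \bigcap_{Q \in \Min(I) \cap \Delta} Q$, observe that $I \subseteq J$ (each $Q$ contains $I$), and show that each of the three conditions is equivalent to the equality $I = J$.

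First, I would compute $I^{s_\Delta} \cap D$ explicitly. For $P \in \Delta$ with $I \not\subseteq P$, the factor $ID_P \cap D = D$ contributes nothing; for $P \in \Delta$ with $I \subseteq P$, the localization $ID_P$ is a radical ideal of $D_P$ (as a localization of the radical ideal $I$), so it equals the intersection of its minimal primes $QD_P$ taken over $Q \in \Min(I)$ with $Q \subseteq P$, whose contractions are the $Q$ themselves. Intersecting over all $P \in \Delta$, the $Q \in \Min(I)$ that contribute are exactly those lying under some $P \in \Delta$, i.e., those in $\Delta^\downarrow = \Delta$. Hence $I^{s_\Delta} \cap D = J$, making (i) literally the statement $I = J$.

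Next, I would rewrite (iii) using that the Zariski closure of a set $S \subseteq \Spec(D)$ is $\V\!\left(\bigcap_{P \in S} P\right)$: the closure of $\Min(I) \cap \Delta$ inside $\Min(I)$ is $\Min(I) \cap \V(J)$, so density amounts to $J \subseteq P$ for every $P \in \Min(I)$, i.e., $J \subseteq \bigcap_{P \in \Min(I)} P = I$, which together with $I \subseteq J$ is $I = J$.

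For (ii), set $J' := \bigcap_{P \in \V(I) \cap \Delta} P$; the same closure formula reduces (ii) to $I = J'$, so it suffices to check $J = J'$. The inclusion $J' \subseteq J$ is immediate from $\Min(I) \cap \Delta \subseteq \V(I) \cap \Delta$; conversely, given $P \in \V(I) \cap \Delta$, picking a minimal prime $Q$ of $I$ with $Q \subseteq P$ yields $Q \in \Delta^\downarrow = \Delta$, hence $Q \in \Min(I) \cap \Delta$, whence $J \subseteq Q \subseteq P$, giving $J \subseteq J'$. There is no real obstacle in this plan; the whole argument is essentially bookkeeping, and the only places where the hypothesis $\Delta = \Delta^\downarrow$ is genuinely used are the reduction to $\Min(I) \cap \Delta$ in step one and the verification $J = J'$ in step three.
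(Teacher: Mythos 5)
Your proof is correct. You take a somewhat different route than the paper: the paper first invokes its Proposition~\ref{prop:chius-rad} to know that $J := I^{s_\Delta}\cap D$ is a radical ideal, then argues topologically that $\V(J)$ is a closed set containing $\V(I)\cap\Delta$, and handles the non-density case by producing an intermediate radical ideal $L$ with $\V(I)\cap\Delta\subseteq\V(L)\subsetneq\V(I)$; it then proves the equivalence (ii)$\iff$(iii) as a separate topological step. You instead compute $I^{s_\Delta}\cap D$ outright: localizing the radical ideal $I$ at each $P\in\Delta\cap\V(I)$ gives $\bigcap_{Q\in\Min(I),\,Q\subseteq P}Q$ after contraction, and intersecting over $P$ (using $\Delta=\Delta^\downarrow$ to see the index set of $Q$'s is exactly $\Min(I)\cap\Delta$) yields $I^{s_\Delta}\cap D=\bigcap_{Q\in\Min(I)\cap\Delta}Q=:J$, which makes Proposition~\ref{prop:chius-rad} unnecessary in this special case. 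Having identified $J$ concretely, you rewrite (ii) and (iii) via the closure formula $\overline{S}=\V(\bigcap_{P\in S}P)$, and the verification $J=J'$ again uses $\Delta=\Delta^\downarrow$. This is a clean and slightly more self-contained argument: it reduces all three statements to the single equality $I=J$ rather than chaining (i)$\iff$(ii)$\iff$(iii), and it exposes exactly where the hypothesis on $\Delta$ is used. The trade-off is that the paper's version reuses machinery (Proposition~\ref{prop:chius-rad}) that serves elsewhere, whereas yours re-derives the needed radicality by hand in the spectral case.
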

\begin{proof}
By Proposition \ref{prop:chius-rad}, the ideal $J:=I^{s_\Delta}\cap D$ is a radical ideal of $D$ containing $I$; therefore, $\V(J)\subseteq \V(I)$ is a closed set, and $\V(J)$ contains $\V(I)\cap\Delta$ since, if $P\in\V(I)\cap\Delta$, then $J=I^\star\cap D\subseteq P^\star\cap D=P$. In particular, if $\V(I)\cap\Delta$ is dense then it must be $I=J$. On the other hand, if $\V(I)\cap\Delta$ is not dense, then there is an ideal $L$ such that $\Delta\cap \V(L)\subsetneq \V(I)$; thus, $ID_P=LD_P$ for every $P\in\Delta$, and $J\supseteq L$, so that $I\neq J$. Thus, the first two conditions are equivalent.

If $\Min(I)\cap\Delta$ is dense in $\Min(I)$, then $\Min(I)$ is contained in the closure of $\V(I)\cap\Delta$; then, $\V(I)\cap\Delta$ is dense since $\Min(I)$ is dense in $\V(I)$. Conversely, suppose $\V(I)\cap\Delta$ is dense in $\V(I)$ and take $P\in\Min(I)$. For every open set $\Omega$ meeting $\V(I)$, $\Omega\cap\Delta\cap\V(I)$ is nonempty; if $Q$ belongs to the intersection, then $\Omega\cap\Delta\cap\Min(I)$ contains the minimal primes of $I$ contained in $Q$. Hence, $\Delta\cap\Min(I)$ is dense in $\Min(I)$. Thus also the last two conditions are equivalent.
\end{proof}

Let $D$ be an integral domain. The space $\insspectral(D)$ of spectral semistar operation on $D$ is a complete lattice: indeed, let $X:=\{s_{\Delta_\alpha}\mid\alpha\in A\}$ be a subset of $\insspectral(D)$ with $\Delta_\alpha=\Delta_\alpha^\downarrow$. Then, setting $\Delta^\cup:=\cup_\alpha\Delta_\alpha$ and $\Delta^\cap:=\bigcap_\alpha\Delta_\alpha$, it is easy to see that the infimum of $X$ in $\insspectral(D)$ is $s_{\Delta^\cup}$ and that its supremum is $s_{\Delta^\cap}$.

However, while $s_{\Delta^\cup}$ is also the infimum of $X$ as a subset of $\inssemistar(D)$, the same does not hold for $s_{\Delta^\cap}$ (see Example \ref{ex:supnonrad}). We now want to prove that the set $\insrad(D)$ of radical semistar operations is the join-completion of $\insspectral(D)$ in $\inssemistar(D)$. In particular, the construction of Example \ref{ex:supnonrad} is the only way to obtain non-spectral radical semistar operations.

\begin{teor}\label{teor:completeness}
Let $D$ be an integral domain. Then:
\begin{enumerate}[(a)]
\item $\insspectral(D)$ is join-dense in $\insrad(D)$;
\item $\insrad(D)$ is the completion of $\insspectral(D)$ in $\inssemistar(D)$.
\end{enumerate}
\end{teor}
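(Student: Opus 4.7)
The plan is to establish part (a) by exhibiting, for a given radical $\star$, an explicit family of spectral operations whose supremum is $\star$, and then to derive part (b) formally from (a) together with Theorem \ref{teor:radical-compllattice}. For part (a), given $\star \in \insrad(D)$ with associated localizing system $\mathcal{F}^\star = \{I \mid 1 \in I^\star\}$, I would associate to each radical ideal $L \in \mathcal{F}^\star$ the spectral operation $s_{\D(L)}$, where $\D(L) = \Spec(D) \setminus \V(L)$. The key identification is
\[
\mathcal{F}^{s_{\D(L)}} = \{I \mid I \not\subseteq P \text{ for all } P \in \D(L)\} = \{I \mid \V(I) \subseteq \V(L)\} = \{I \mid L \subseteq \rad(I)\}.
\]

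To verify $s_{\D(L)} \leq \star$, I would argue that if $L \subseteq \rad(I)$, then superset closure of $\mathcal{F}^\star$ (applied to $L \in \mathcal{F}^\star$) forces $\rad(I) \in \mathcal{F}^\star$, and the radical property of $\star$ then forces $I \in \mathcal{F}^\star$; hence $\mathcal{F}^{s_{\D(L)}} \subseteq \mathcal{F}^\star$. To show $\star = \sup_L s_{\D(L)}$, with $L$ ranging over radical ideals in $\mathcal{F}^\star$, I would use the fact that the supremum of stable operations in $\inssemistar(D)$ is stable and has localizing system equal to the smallest one containing $\bigcup_L \mathcal{F}^{s_{\D(L)}}$. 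Using the above identification together with the radical property (so that for each $I \in \mathcal{F}^\star$ one may legitimately take $L := \rad(I)$, which lies in $\mathcal{F}^\star$), this union already equals $\mathcal{F}^\star$; since $\mathcal{F}^\star$ is itself a localizing system, the supremum has localizing system exactly $\mathcal{F}^\star$ and therefore equals $\star$, as stable operations are determined by their localizing systems.

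For part (b), Theorem \ref{teor:radical-compllattice} shows that $\insrad(D)$ is closed under arbitrary suprema in $\inssemistar(D)$, and (a) shows it is join-generated by $\insspectral(D)$; any sup-closed subset of $\inssemistar(D)$ containing $\insspectral(D)$ must then contain every supremum of a family of spectral operations, which is all of $\insrad(D)$, so $\insrad(D)$ is the smallest such subset, i.e., the join-completion. The main obstacle is pinpointing the correct family of spectral operations for (a): once one guesses the assignment $L \mapsto s_{\D(L)}$ for $L$ a radical ideal in $\mathcal{F}^\star$, everything reduces to a short computation with localizing systems using only superset closure and the defining radical property.
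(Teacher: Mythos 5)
Your proof is correct, and it takes a genuinely different and cleaner route than the paper. The paper establishes part~(a) by introducing a family of ``admissible'' sets $\Delta\subseteq\Spec(D)$ (those for which $\Delta\cap\V(I)$ is dense in $\V(I)$ for every $\star$-closed radical ideal $I$), shows each corresponding $s_\Delta$ satisfies $s_\Delta\leq\star$ using Proposition~\ref{prop:dense-closed}, and then proves the supremum $\sharp$ of these equals $\star$ via a topological contradiction argument: given a radical $J$ with $1\in J^\star$, one shows $\D(J)$ is itself admissible by checking the density condition. Your proof bypasses the topological machinery entirely by working directly at the level of localizing systems. The identification
\begin{equation*}
\mathcal{F}^{s_{\D(L)}}=\{I\mid L\subseteq\rad(I)\}
\end{equation*}
is correct (note $L\neq(0)$ since $(0)\notin\mathcal{F}^\star$, so $\D(L)\neq\emptyset$ and $s_{\D(L)}$ is genuinely a semistar operation), the inclusion $\mathcal{F}^{s_{\D(L)}}\subseteq\mathcal{F}^\star$ follows from superset closure plus the radical property exactly as you say, and taking $L=\rad(I)$ for each $I\in\mathcal{F}^\star$ shows the union $\bigcup_L\mathcal{F}^{s_{\D(L)}}$ already equals $\mathcal{F}^\star$. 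Since $\mathcal{F}^\star$ is itself a localizing system and the sup in $\insstable(D)$ corresponds to the smallest localizing system containing the union (here you use that localizing systems are closed under arbitrary intersection, so that smallest one exists, together with the order isomorphism $\star\mapsto\mathcal{F}^\star$ and the fact from \cite[Proposition~5.3]{non-ft} that $\insstable(D)$ is sup-closed in $\inssemistar(D)$), you conclude $\sup_L s_{\D(L)}=\star$. Your derivation of~(b) from~(a) and Theorem~\ref{teor:radical-compllattice} matches the paper's. What the paper's longer approach buys is a precise density criterion for which spectral $s_\Delta$ lie below a given radical $\star$, a criterion (via Proposition~\ref{prop:dense-closed}) that is reused later in Lemma~\ref{lemma:rcc-Tstar} and Theorem~\ref{teor:rcc-radspectral}; your argument is the more economical proof of Theorem~\ref{teor:completeness} itself.
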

\begin{proof}
Since $\insrad(D)$ is a complete sublattice of $\inssemistar(D)$ (Theorem \ref{teor:radical-compllattice}), we only need to prove that every radical stable operation is the supremum of a family of spectral operations.

Fix thus $\star\in\insrad(D)$. Let $\Delta\subseteq\Spec(D)$ be such that $\Delta\cap  \V(I)$ is dense in $\V(I)$ for every radical ideal $I$ such that $I=I^\star\cap D$. Then, $s_\Delta\leq\star$: indeed, if $J$ is an ideal such that $1\in J^{s_\Delta}$ and $1\notin J^\star$, then $\Delta\cap \V(J^\star\cap D)$ would be dense in $\V(J^\star\cap D)$, and thus by Proposition \ref{prop:dense-closed} $J^\star\cap D$ would be quasi-$s_\Delta$-closed, against the fact that $1\in J^{s_\Delta}$. Hence, $s_\Delta\leq\star$. Let $\sharp$ be the supremum of all such $s_\Delta$: by construction, $\sharp\leq\star$.

We claim that $\star=\sharp$. Let $J$ be a proper radical ideal: if $1\in J^\sharp$, then $1\in J^\star$ since $\sharp\leq\star$. Suppose that $1\in J^\star$. We claim that $\D(J)\cap \V(I)$ is dense in $\V(I)$ for every radical ideal $I$ such that $I=I^\star\cap D$. If not, there is a $P\in \V(J)$ that is not in the closure of $\D(I)\cap \V(J)$; hence, there is a radical ideal $L$ such that $P\in \D(L)$ and $\D(L)\cap \D(J)\cap \V(I)=\emptyset$. Since $\D(L)\cap \D(J)=\D(L\cap J)$, it follows that $\D(L\cap J)\cap \V(I)=\emptyset$, and thus $\V(I)\subseteq \V(L\cap J)$. Thus, $L\cap J\subseteq I$, and $L^\star\cap J^\star=(L\cap J)^\star\subseteq I^\star$. Hence
\begin{equation*}
(J^\star\cap D)\cap(L^\star\cap D)\subseteq I^\star\cap D=I.
\end{equation*}
By hypothesis, $J^\star$ contains $1$; hence, $J^\star\cap D=D$ and $L^\star\cap D\subseteq I$, so that $L\subseteq I$. In particular, $\D(L)\subseteq \D(I)$; it follows that $\D(L)\cap \V(I)=\emptyset$, against the hypothesis that $P\in \D(L)\cap \V(I)$. Therefore, $\D(J)\cap \V(I)$ is dense in $\V(I)$ for all radical ideal $I$ such that $I=I^\star\cap D$; thus, $s_{\D(J)}$ is one of the spectral operations used to define $\sharp$; hence, $s_{\D(J)}\leq\sharp$. It follows that $1\in J^{s_{\D(J)}}\subseteq J^\sharp$. Therefore, $1\in J^\star$ if and only if $1\in J^\sharp$; since $\star$ and $\sharp$ are stable, it follows that $\star=\sharp$, as claimed, and $\star$ is in the completion of $\insspectral(D)$.
\end{proof}

\section{Isomorphic sets of radical operations}
Let $D_1,D_2$ be two integral domains. If $\phi:\Spec(D_1)\longrightarrow\Spec(D_2)$ is an order isomorphism, then $\phi$ induces an order isomorphism $\Phi:\insspectral(D_1)\longrightarrow\insspectral(D_2)$ by setting $\Phi(s_\Delta)=s_{\phi(\Delta)}$ for every $\Delta\subseteq\Spec(D_1)$. However, $\Phi$ does not, in general, extend to a similar isomorphism between the set of radical semistar operations, for example because it may be $\insspectral(D_1)=\insrad(D_1)$ while $\insspectral(D_2)\neq\insrad(D_2)$ (take for example $D_1:=K[X]$ and $D_2:=\ins{A}$, where $K$ is a field of the same cardinality of $\Max(\ins{A})$).

In this section, we extend this result to radical operations by using the Zariski topology. We work in a particular class of domains: we say that a domain is \emph{rad-colon coherent} if, for every $x\in K$, the radical of the ideal $(D:_Dx)$ is the radical of a finitely generated ideal. This property is linked with the relationship between the Zariski, inverse and constructible topology of $\Spec(D)$ and the Zariski, inverse and constructible topology of $\Over(D)$. Every Noetherian domain (or, more generally, every domain with Noetherian spectrum) is rad-colon coherent; likewise, every Pr\"ufer domain and every coherent domain are rad-colon coherent, as well as every polynomial in finitely many variables over a Pr\"ufer domain. See \cite{localizzazioni} for applications of this property and for an example of a domain that is not rad-colon coherent.

In our context, the reason why we use this notion is essentially the following lemma.
\begin{lemma}\label{lemma:rcc-Tstar}
Let $D$ be a rad-colon coherent domain and let $I$ be a radical ideal. Define $T:=\bigcap\{D_P\mid P\in\Min(I)\}$. If $\star$ is a radical semistar operation such that $I=I^\star\cap D$, then $T^\star=T$ and $(IT)^\star=IT$.
\end{lemma}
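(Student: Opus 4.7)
The two equalities are proved by parallel arguments: in each case, one takes an element of the $\star$-closure, assumes it lies outside the set in question, and derives a contradiction from stability, rad-colon coherence, and the topology of $\Min(I)$. The key ingredient extracted from the proof of Theorem~\ref{teor:completeness} is the following density principle: if $\star$ is radical and $J$ is a proper radical ideal of $D$ with $1\in J^\star$, then $\D(J)\cap\V(I)$ is dense in $\V(I)$, and hence $\D(J)\cap\Min(I)$ is dense in $\Min(I)$ (since any open set meeting $\V(I)$ also meets $\Min(I)$). Combined with the fact, noted in Section~\ref{sect:prelim}, that for any finitely generated ideal $F$ the set $\V(F)\cap\Min(I)$ is clopen in $\Min(I)$---because $\V(F)$ is clopen in the constructible topology, which coincides with Zariski on $\Min(I)$---this will let us show that certain distinguished subsets of $\Min(I)$ must be empty.

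For $T^\star=T$, let $y\in T^\star$, set $L:=(D:_Dy)$ and, by rad-colon coherence, choose a finitely generated $F\subseteq D$ with $\rad(F)=\rad(L)$. Then $\Sigma:=\{P\in\Min(I):L\subseteq P\}$ equals $\Min(I)\cap\V(F)$, hence is clopen in $\Min(I)$, and $y\in T$ iff $\Sigma=\emptyset$. Stability of $\star$ yields $1\in(T:_Dy)^\star$; moreover, for $P\in\Sigma$ and $d\in(T:_Dy)$, writing $dy=a/s$ in $D_P$ with $s\notin P$ gives $a=s(dy)\in D$, so $sd\in L\subseteq P$ and hence $d\in P$. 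Thus $(T:_Dy)\subseteq\bigcap_{P\in\Sigma}P=:J_\Sigma$, so $1\in J_\Sigma^\star$. If $\Sigma\neq\emptyset$, then $J_\Sigma$ is a proper radical ideal, and the density principle forces $\Min(I)\setminus\Sigma$ to be dense in $\Min(I)$, so $\Sigma$ has empty interior; but $\Sigma$ is clopen, hence equals its interior, giving $\Sigma=\emptyset$, a contradiction.

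For $(IT)^\star=IT$, note first that because $I$ is radical and each $P\in\Min(I)$ is minimal over $I$, $ID_P=PD_P$, and one checks directly that $IT=\bigcap_{P\in\Min(I)}PD_P$. Using $(IT)^\star\subseteq T^\star=T$ from the first part, fix $z\in(IT)^\star$ (the case $z=0$ being trivial). Set $\Xi:=\{P\in\Min(I):z\text{ is a unit in }D_P\}=\{P:(D:_Dz^{-1})\not\subseteq P\}$, and apply rad-colon coherence to $z^{-1}\in K$ to obtain a finitely generated $F''$ with $\rad(F'')=\rad((D:_Dz^{-1}))$; then $\Xi=\Min(I)\cap\D(F'')$ is clopen in $\Min(I)$. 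The analogous stability-plus-density-plus-clopen argument---using now that for $P\in\Xi$ and $d\in(IT:_Dz)$, $dz\in IT\subseteq PD_P$ together with $z$ a unit in $D_P$ forces $d\in PD_P\cap D=P$---yields $\Xi=\emptyset$, so $z\in PD_P$ for every $P\in\Min(I)$, and therefore $z\in IT$.

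The main obstacle, and the exact reason rad-colon coherence is the right hypothesis, is the passage from an arbitrary colon ideal---which in general need not be finitely generated---to a finitely generated ideal whose vanishing locus in $\Min(I)$ is clopen rather than merely closed. It is precisely this upgrade that turns the density conclusion ``$\Sigma$ (or $\Xi$) has empty interior'' into ``$\Sigma$ (or $\Xi$) is empty,'' and this collapse is the heart of the lemma.
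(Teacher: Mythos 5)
Your proof follows essentially the same route as the paper's, with one organizational difference: the paper first treats the spectral case $\star=s_\Delta$ (deducing from Proposition~\ref{prop:dense-closed} that $\Delta\cap\Min(I)$ is dense in $\Min(I)$, then using rad-colon coherence to make $\D((D:_Dx))\cap\Min(I)$ closed in the constructible topology and hence equal to $\Min(I)$), and then passes to general radical $\star$ by invoking Theorem~\ref{teor:completeness} to write $\star$ as a supremum of spectral operations. You instead run the general case directly by extracting the ``density principle'' from the interior of the proof of Theorem~\ref{teor:completeness}. The two arguments are the same in substance --- both hinge on the observation that rad-colon coherence upgrades the relevant colon ideal to (the radical of) a finitely generated ideal, so that the resulting locus in $\Min(I)$ is clopen, and density then forces it to be empty. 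Your explicit phrasing of this mechanism (``density gives empty interior, clopen gives empty'') is a nice way to see why the hypothesis is exactly what is needed.

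One point deserves more care, and it is a place where you and the paper both compress the argument: your claim that ``one checks directly that $IT=\bigcap_{P\in\Min(I)}PD_P$.'' The inclusion $IT\subseteq\bigcap_P PD_P$ is clear, but the reverse is not a triviality --- it amounts to the statement that $IT$ is $s_{\Min(I)}$-closed, which is a special case of the very lemma you are proving (taking $\star=s_{\Min(I)}$), so as written the step is close to circular. The paper's ``Hence $(IT)^\star=IT$'' is making an analogous implicit identification $\bigcap_P PT=IT$. Note, though, that your $\Xi$-argument already shows $z\in\bigcap_P PD_P$ from $z\in(IT)^\star$ without using this identity; if you instead take $\bigcap_P PD_P$ as the target (i.e., prove $(\bigcap_P PD_P)^\star=\bigcap_P PD_P$, which is what Proposition~\ref{prop:sup-rcc} actually needs, since $\bigcap_P PD_P\cap D=I$), the same stability-plus-density-plus-clopen computation goes through verbatim with $(\bigcap_P PD_P:_Dz)$ in place of $(IT:_Dz)$, and the unjustified equality disappears from the argument entirely.
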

\begin{proof}
Suppose first that $\star=s_\Delta$ is spectral, with $\Delta=\Delta^\downarrow$. Then, by Proposition \ref{prop:dense-closed}, $\Delta\cap \V(I)$ is dense in $\V(I)$ and $\Delta\cap\Min(I)$ is dense in $\Min(I)$, with respect to the Zariski topology. By \cite[Corollary 4.4.6(i)]{spectralspaces-libro}, the Zariski and the constructible topology agree on $\Min(I)$; hence, $\Delta\cap\Min(I)$ is dense in $\Min(I)$ also with respect to the constructible topology.

Let $x\in T^\star$, and let $J:=(D:_Dx)=x^{-1}D\cap D$. We claim that $\V(J)\cap\Min(I)\cap\Delta=\emptyset$. Indeed, let $P\in\Min(I)\cap\Delta$. Since $x\in T^\star\subseteq D_P^\star$, we have $1\in(x^{-1}D_P)^\star$, and thus $1\in(JD_P)^\star$; however, if $P\in \V(J)$ then $(JD_P)^\star\subseteq(PD_P)^\star=PD_P$ since $P\in\Delta$. Therefore, $\V(J)\cap\Min(I)\cap\Delta=\emptyset$, and thus $\Min(I)\cap\Delta\subseteq \D(J)$. Since $D$ is rad-colon coherent, $\rad(J)$ is the radical of a finitely generated ideal, and thus $\D(J)$ is a closed subset, with respect to the constructible topology; thus $\D(J)\cap\Min(I)$ is closed in $\Min(I)$. Since $\Min(I)\cap\Delta$ is dense in $\Min(I)$, it follows that $\D(J)\cap\Min(I)$ must be equal to the whole of $\Min(I)$, that is, $\V(J)\cap\Min(I)=\emptyset$. Thus, $JD_P=D_P$ for every $P\in\Min(I)$, and $x\in T$. Hence, $T^\star=T$.

This also implies that $(IT)^\star$ is a radical ideal of $T$ contained in $PT$ for every $P\in\Min(I)$. Hence $(IT)^\star=IT$, as claimed.

Suppose now that $\star$ is any radical operation. By Theorem \ref{teor:completeness}, $\star$ is the supremum of a family $Y$ of spectral semistar operation. For each $\sharp\in Y$, we have $\sharp\leq\star$, and thus $I=I^\sharp\cap D$; by the previous part of the proof, $T^\sharp=T$ and $(IT)^\sharp=IT$. Hence, also $T^\star=T$ and $(IT)^\star=IT$, as claimed.
\end{proof}

\begin{prop}\label{prop:sup-rcc}
Let $D$ be a rad-colon coherent domain and let $I$ be a radical ideal. Let $Y$ be a family of radical semistar operations and let $\sharp:=\sup Y$. If $I=I^\star\cap D$ for every $\star\in Y$, then $I=I^\sharp\cap D$.
\end{prop}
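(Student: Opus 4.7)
The plan is to reduce the problem to Lemma \ref{lemma:rcc-Tstar} by passing to the overring $T:=\bigcap\{D_P\mid P\in\Min(I)\}$, where everything is under control, and then descend back to $D$ via a clean calculation with the minimal primes of the radical ideal $I$.

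First, for every $\star\in Y$ the hypothesis gives $I=I^\star\cap D$, so the (nontrivial) Lemma \ref{lemma:rcc-Tstar} applies and yields $T^\star=T$ and, more importantly, $(IT)^\star=IT$. In particular, $IT$ is $\star$-closed for every $\star\in Y$. Now I invoke the explicit description of the supremum in $\inssemistar(D)$ recalled in Section \ref{sect:prelim}: a submodule is $\sharp$-closed if and only if it is $\star$-closed for every $\star\in Y$. Consequently $(IT)^\sharp=IT$. Since $I\subseteq IT$, monotonicity gives $I^\sharp\subseteq(IT)^\sharp=IT$, and hence
\begin{equation*}
I^\sharp\cap D\subseteq IT\cap D.
\end{equation*}

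To finish, I would show that $IT\cap D=I$. Since $I$ is radical, $I=\bigcap_{P\in\Min(I)}P$. For each $P\in\Min(I)$ the overring $T$ is contained in $D_P$, so $IT\subseteq ID_P\subseteq PD_P$, and therefore
\begin{equation*}
IT\cap D\subseteq\bigcap_{P\in\Min(I)}(PD_P\cap D)=\bigcap_{P\in\Min(I)}P=I.
\end{equation*}
The reverse inclusion $I\subseteq IT\cap D$ is clear since $1\in T$. Combined with the previous step and the trivial inclusion $I\subseteq I^\sharp\cap D$, this yields $I=I^\sharp\cap D$.

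There is no serious obstacle here: the technical core is already packaged inside Lemma \ref{lemma:rcc-Tstar} (which is where the rad-colon coherent hypothesis is actually used). The only points to be careful about are, first, remembering that the supremum of a family of semistar operations is characterized through closed submodules rather than through closure images (so $(IT)^\sharp=IT$ is immediate from $(IT)^\star=IT$ for every $\star\in Y$, without any direct limit-type argument), and second, that the equality $IT\cap D=I$ requires only that $I$ be radical and not that the individual ideals $ID_P$ be prime.
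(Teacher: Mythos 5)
Your proof is correct and follows essentially the same path as the paper: pass to $T:=\bigcap\{D_P\mid P\in\Min(I)\}$, apply Lemma \ref{lemma:rcc-Tstar} to each $\star\in Y$ to get $(IT)^\star=IT$, use the closed-submodule characterization of the supremum to conclude $(IT)^\sharp=IT$, and finish with $I^\sharp\cap D\subseteq IT\cap D=I$. You merely spell out the easy verification that $IT\cap D=I$ (via $T\subseteq D_P$ and $PD_P\cap D=P$ for each $P\in\Min(I)$), which the paper leaves implicit.
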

\begin{proof}
Let $T:=\bigcap\{D_P\mid P\in\Min(I)\}$. By Lemma \ref{lemma:rcc-Tstar}, $(IT)^\star$ is closed by every $\star\in Y$, and thus also $(IT)^\sharp$ is closed. Then, $I^\sharp\cap D\subseteq(IT)^\sharp\cap D=I$, and thus $I=I^\sharp\cap D$.
\end{proof}

We are ready to prove the main result of this section.
\begin{teor}\label{teor:insrad-iso}
Let $D_1,D_2$ be rad-colon coherent integral domains, and suppose that there is a homeomorphism $\phi:\Spec(D_1)\longrightarrow\Spec(D_2)$. Then, there is an order isomorphism
\begin{equation*}
\Phi:\insrad(D_1)\longrightarrow\insrad(D_2)
\end{equation*}
such that $\Phi(s_\Delta)=s_{\phi(\Delta)}$ for every $\Delta\subseteq\Spec(D_1)$.
\end{teor}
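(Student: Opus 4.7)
The strategy is to exploit the join-density of $\insspectral$ in $\insrad$ from Theorem \ref{teor:completeness} together with a characterization of radical stable operations by their ``quasi-closed radical ideals'', transporting this data through $\phi$. A homeomorphism of spectra preserves specialization, so $\phi$ is automatically an order isomorphism and induces bijections both between downward-closed subsets of the two spectra and between their radical ideals (equivalently, their closed subsets).

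First I would define
\[
\Phi(\star) := \sup\bigl\{s_{\phi(\Delta)} : \Delta = \Delta^\downarrow \subseteq \Spec(D_1),\ s_\Delta \leq \star\bigr\},
\]
which lies in $\insrad(D_2)$ by Theorem \ref{teor:radical-compllattice} and is evidently order-preserving. The identity $\Phi(s_\Delta) = s_{\phi(\Delta)}$ reduces to the observation that, for downward-closed subsets, $s_{\Delta'} \leq s_\Delta$ iff $\Delta' \supseteq \Delta$, so $s_{\phi(\Delta)}$ is the maximum element of the supremized family.

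The heart of the proof is that each $\star \in \insrad(D)$ is determined by the set $\mathcal{Q}(\star) := \{J : J\text{ proper radical ideal of }D,\ J = J^\star \cap D\}$. Indeed, Proposition \ref{prop:chius-rad} shows that $J^\star \cap D$ is either $D$ or a radical ideal, and quasi-radicality then lets one recover $\mathcal{F}^\star$ (hence $\star$) as the set of ideals $I$ whose radical is contained in no member of $\mathcal{Q}(\star)$. Theorem \ref{teor:completeness} writes $\star_1 = \sup\{s_\Delta : s_\Delta \leq \star_1\}$, and Proposition \ref{prop:sup-rcc} applied in $D_1$ and in $D_2$ yields
\[
\mathcal{Q}(\star_1) = \bigcap_{s_\Delta \leq \star_1}\mathcal{Q}(s_\Delta), \qquad \mathcal{Q}(\Phi(\star_1)) = \bigcap_{s_\Delta \leq \star_1}\mathcal{Q}(s_{\phi(\Delta)}).
\]
By Proposition \ref{prop:dense-closed}, $J \in \mathcal{Q}(s_\Delta)$ iff $\Delta \cap \V(J)$ is dense in $\V(J)$; since $\phi$ is a homeomorphism, if $J' \subseteq D_2$ denotes the radical ideal with $\V(J') = \phi(\V(J))$, then this condition is equivalent to $\phi(\Delta) \cap \V(J')$ being dense in $\V(J')$. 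Hence $J \mapsto J'$ restricts to a bijection $\mathcal{Q}(\star_1) \leftrightarrow \mathcal{Q}(\Phi(\star_1))$, so $\Phi$ is injective. Defining $\Psi : \insrad(D_2) \to \insrad(D_1)$ symmetrically via $\phi^{-1}$, the same argument gives $\mathcal{Q}(\Psi(\Phi(\star_1))) = \mathcal{Q}(\star_1)$, hence $\Psi\circ\Phi = \mathrm{id}$, and by symmetry $\Phi\circ\Psi = \mathrm{id}$; together with order preservation, $\Phi$ is an order isomorphism.

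The main obstacle is precisely the identity $\mathcal{Q}(\sup Y) = \bigcap_{\star \in Y}\mathcal{Q}(\star)$: one inclusion is formal, but the opposite inclusion is Proposition \ref{prop:sup-rcc}, and it is here that rad-colon coherence of both $D_1$ and $D_2$ is indispensable. Without this, the invariant $\mathcal{Q}$ would not be compatible with arbitrary suprema, so the $\phi$-transport could not be carried out operation by operation and the bijection between $\mathcal{Q}(\star_1)$ and $\mathcal{Q}(\Phi(\star_1))$ would collapse.
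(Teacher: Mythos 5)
Your proof is correct, and it takes a genuinely different route from the paper's, although both hinge on the same technical workhorse (Proposition~\ref{prop:sup-rcc}).

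The paper proceeds abstractly: it regards $\insrad(D_i)$ as a join-completion of $\insspectral(D_i)$, embeds it into the lattice $\mathcal{L}(\insspectral(D_i))$ of lower sets via $\epsilon_i(y)=\{x\in\insspectral(D_i)\mid x\leq y\}$, lifts $s_\Delta\mapsto s_{\phi(\Delta)}$ to an isomorphism $\widetilde{\Phi}$ of these lower-set lattices, and reduces $\widetilde{\Phi}(\epsilon_1(Y_1))=\epsilon_2(Y_2)$ to showing that for any $A\subseteq\insspectral(D_1)$, $\sup A$ is spectral in $\insrad(D_1)$ if and only if $\sup\Phi(A)$ is spectral in $\insrad(D_2)$. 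You instead isolate the concrete invariant $\mathcal{Q}(\star)$ of proper radical quasi-$\star$-closed ideals, prove it determines $\star$ (using Proposition~\ref{prop:chius-rad} and radicality to recover $\mathcal{F}^\star$), show via Proposition~\ref{prop:sup-rcc} that $\mathcal{Q}$ turns suprema of families of radical operations into intersections, and transport $\mathcal{Q}$ through $\phi$ using Proposition~\ref{prop:dense-closed} and the bijection between radical ideals and closed subsets of the spectrum; the explicit inverse $\Psi$ then closes the argument. Your approach is arguably more transparent in that the isomorphism and its inverse are constructed and matched directly, whereas the paper's reduction from $\widetilde{\Phi}(\epsilon_1(Y_1))=\epsilon_2(Y_2)$ to the spectral/non-spectral dichotomy for $\sup A$ is left somewhat implicit. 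On the other hand, the paper's route makes the structural statement --- that $\insrad(D)$ is the join-completion of $\insspectral(D)$ --- do the heavy lifting, which better explains why the theorem ``should'' be true. Both approaches need rad-colon coherence of both domains and invoke symmetry via $\phi^{-1}$ for the reverse direction.

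One small point worth spelling out in a polished write-up: in recovering $\mathcal{F}^\star$ from $\mathcal{Q}(\star)$, the key observation is that if $1\notin\rad(I)^\star$, then $J_0:=\rad(I)^\star\cap D$ lies in $\mathcal{Q}(\star)$ (since $J_0^\star\cap D\subseteq\rad(I)^\star\cap D=J_0$ because $\star$ is idempotent) and contains $\rad(I)$; this is what makes your ``contained in no member of $\mathcal{Q}(\star)$'' criterion work in both directions.
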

\begin{proof}
Let $X_i:=\insspectral(D_i)$ and $Y_i:=\insrad(D_i)$ for $i=1,2$.

By Theorem \ref{teor:radical-compllattice}, $Y_1$ is a join-completion of $X_1$; hence, we can consider $Y_1$ as a sublattice of the set $\mathcal{L}(X_1)$ of lower sets of $X_1$ by the map $\epsilon_1$, defined by $\epsilon_1(y)=\{x\in X_1\mid x\leq y\}$ for every $y\in Y$. In particular, $\epsilon_1(x)=\{x\}^\downarrow$ for every $x\in X_1$. Likewise, we can consider $Y_2$ as a sublattice of $\mathcal{L}(X_2)$ through a map $\epsilon_2$ defined analogously.

The map
\begin{equation*}
\begin{aligned}
\Phi\colon X_1 & \longrightarrow X_2,\\
s_\Delta& \longmapsto s_{\phi(\Delta)}
\end{aligned}
\end{equation*}
is an order isomorphism; thus, it can be extended to a map $\widetilde{\Phi}$ between $\mathcal{L}(X_1)$ and $\mathcal{L}(X_2)$, that remains an order isomorphism. We claim that $\widetilde{\Phi}(\epsilon_1(Y_1))=\epsilon_2(Y_2)$, and to do so it is enough to prove that, if $A\subseteq X_1$, then the supremum $\sup_{Y_1}A$ in $Y_1$ (that is, the supremum of $A$ as a semistar operation) is spectral if and only if $\sup_{Y_2}\Phi(A)$ is spectral.

Suppose first that $\star:=\sup_{Y_1}A$ is not spectral, and let $\sharp=s_\Delta$ be the supremum of $A$ in $X_1$. Let $\star'$ and $\sharp'$ be, respectively, the supremum of $\Phi(A)$ in $Y_2$ and $X_2$. By construction, $\star<\sharp$, and thus there is a radical ideal $I$ such that $I=I^\star\cap D_1$ while $I^\sharp=D_1^\sharp$. Let now $J$ be the radical ideal such that $\V(J)=\phi(\V(I))$; we claim that $J=J^{\star'}\cap D$ while $J^{\sharp'}=D_2^{\sharp'}$.

Indeed, if $s_\Lambda\in\Phi(A)$, then $\phi^{-1}(\Lambda)\cap\V(I)$ is dense in $\V(I)$, and thus $\Lambda\cap\V(J)$ is dense in $\V(J)$; since $D_2$ is rad-colon coherent, by Proposition \ref{prop:sup-rcc} $J=J^{\sup\Phi(A)}\cap D_2$, i.e., $J=J^{\star'}\cap D_2$. On the other hand, $\sharp=s_\Delta$ for some $\Delta$ such that $\Delta\cap\V(I)=\emptyset$; hence, $\sharp'=\Phi(\sharp)=\Phi(s_\Delta)=s_{\phi(\Delta)}$, where $\phi(\Delta)\cap\V(J)$ is empty. Hence, $J^{\sharp'}=D_2^{\sharp'}$. Thus, $\star'\neq\sharp'$, and $\sup_{Y_2}\Phi(A)$ is not spectral. 

The opposite implication follows by applying the same reasoning to the homeomorphism $\phi^{-1}$ (which induces the map $\Phi^{-1}$ on the sets of spectral semistar operations).

Therefore, $\widetilde{\Phi}$ restricts to an isomorphism between $\epsilon_1(Y_1)$ and $\epsilon_2(Y_2)$; since $Y_i\simeq\epsilon_i(Y_i)$ for $i=1,2$, it follows that $Y_1=\insrad(D_1)$ and $Y_2=\insrad(D_2)$ are isomorphic, as claimed.
\end{proof}

\section{When every spectral operation is radical}\label{sect:scattered}
We have seen that, in general, not every radical semistar operation is spectral, although the two sets are equal when every ideal has only finitely many minimal primes (Corollary \ref{cor:MinIfinito}). In this section, we characterize when the two sets are equal for rad-colon coherent domains; specializing to Pr\"ufer domain, we also show that under this hypothesis we can obtain a standard representation of all stable operations.

We start with two  topological lemmas.
\begin{lemma}
Let $D$ be an integral domain and $I$ a radical ideal that is not prime. Then, $\Min(I)$ is not perfect if and only if there are a prime ideal $Q$ and a radical ideal $J\neq I$ such that $I=Q\cap J$.
\end{lemma}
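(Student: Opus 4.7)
The plan is to translate both sides into a statement about isolated points of $\Min(I)$ (under the Zariski topology, which coincides there with the constructible topology). The key observation is that a point $Q\in\Min(I)$ is isolated if and only if there exists an element $a\in D$ with $a\notin Q$ and $a\in P$ for every $P\in\Min(I)\setminus\{Q\}$: indeed, if $\{Q\}$ is open in $\Min(I)$ it must contain a basic open set $\Min(I)\cap\D(aD)$ that already equals $\{Q\}$, and conversely such an $a$ witnesses openness of $\{Q\}$. Equivalently, letting $J_Q:=\bigcap\{P\in\Min(I):P\neq Q\}$, the point $Q$ is isolated in $\Min(I)$ if and only if $J_Q\not\subseteq Q$.

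For the forward implication, I would suppose $Q\in\Min(I)$ is isolated. Since $I$ is radical and not prime, $|\Min(I)|\geq 2$, so $J_Q$ is the intersection of a nonempty family of prime ideals, hence a radical ideal, and trivially $I=\bigcap\Min(I)=Q\cap J_Q$. The separating element $a\in J_Q\setminus Q$ combined with $I\subseteq Q$ shows $J_Q\neq I$, so setting $J:=J_Q$ gives the required decomposition.

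For the converse, assume $I=Q\cap J$ with $Q$ prime, $J$ radical, and $J\neq I$ (so $J\supsetneq I$). The main step, and the principal obstacle, is to verify that $Q$ is itself a minimal prime of $I$: since $J\not\subseteq I$ I would pick $a\in J$ lying outside some minimal prime $P^\ast$ of $I$; then $P^\ast\supseteq Q\cap J$ together with $J\not\subseteq P^\ast$ forces $Q\subseteq P^\ast$, and the minimality of $P^\ast$ applied to the sandwich $I\subseteq Q\subseteq P^\ast$ yields $Q=P^\ast\in\Min(I)$. For any other $P\in\Min(I)$, the incomparability of distinct minimal primes of $I$ rules out $P\supseteq Q$, so $P\supseteq J$ and in particular $a\in P$; this exhibits $a$ as the separating element described above, whence $Q$ is isolated in $\Min(I)$ and $\Min(I)$ fails to be perfect.
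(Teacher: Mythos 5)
Your argument is correct and follows essentially the same route as the paper's proof. Both directions rely on the same two observations: that the complement of an isolated point of $\Min(I)$ corresponds to a radical ideal $J$ strictly larger than $I$ giving the decomposition $I=Q\cap J$, and that conversely such a decomposition forces $Q$ into $\Min(I)$ (because some minimal prime must avoid $J$) and then forces every other minimal prime of $I$ to contain $J$, exhibiting $\{Q\}$ as open; you spell out the last step via a separating element $a\in J\setminus Q$ where the paper phrases it via $\V(J)$ and $\D(J)$, but the content is identical.
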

\begin{proof}
If $I$ is not perfect, there is an isolated point $Q$ of $\Min(I)$, and $\Min(I)\setminus\{Q\}=\Min(I)\cap\V(J)$ for some radical ideal $J$. By construction, $J\supsetneq I$ and $\V(J)\cup\V(Q)=\V(I)$, so that $I=Q\cap J$. Conversely, if $I=Q\cap J$, then $\V(I)=\V(Q)\cup\V(J)$. Since $I\neq J$, $\V(J)$ cannot contain all minimal primes of $I$; therefore, $Q$ must be contained in $\Min(I)$. Hence, $\{Q\}=\Min(I)\setminus\V(J)$ is open in $\Min(I)$ and $Q$ is isolated; thus $\Min(I)$ is not perfect.
\end{proof}

\begin{lemma}\label{lemma:Min-scat-perf}
Let $D$ be an integral domain.  Then, the following are equivalent:
\begin{enumerate}[(i)]
\item\label{lemma:Min-scat-perf:scat} $\Min(I)$ is scattered for every ideal $I$;
\item\label{lemma:Min-scat-perf:perf} $\Min(I)$ is not perfect for every ideal $I$.
\end{enumerate}
\end{lemma}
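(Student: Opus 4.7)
The plan is to deduce (i) $\Rightarrow$ (ii) from basic topology and (ii) $\Rightarrow$ (i) via a ``perfect kernel'' construction. I will tacitly assume each ideal $I$ is proper, so that $\Min(I)$ is non-empty.

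For (i) $\Rightarrow$ (ii) I argue directly: if $X := \Min(I)$ were both perfect and scattered, then $\deriv^\alpha(X) = X$ for every ordinal $\alpha$ while also $\deriv^\alpha(X) = \emptyset$ for some $\alpha$, forcing $X = \emptyset$ and contradicting our standing assumption. So a non-empty scattered $\Min(I)$ cannot be perfect.

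For (ii) $\Rightarrow$ (i) I argue by contraposition. Suppose $\Min(I_0)$ is not scattered for some proper ideal $I_0$. The decreasing transfinite chain $\{\deriv^\alpha(\Min(I_0))\}_\alpha$ stabilizes at some non-empty set $K$ satisfying $\deriv(K) = K$. I will take $J^* := \bigcap_{P \in K} P$ and show that $\Min(J^*)$ is non-empty and perfect, contradicting (ii).

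The inclusion $K \subseteq \Min(J^*)$ is easy: since each $P \in K \subseteq \Min(I_0)$ contains $I_0$, we have $J^* \supseteq I_0$, and any $Q \subsetneq P$ with $Q \supseteq J^*$ would then contain $I_0$, violating $P \in \Min(I_0)$. The harder step is showing $\Min(J^*)$ has no isolated points. Suppose $\{Q\} = \Min(J^*) \cap \D(L)$ for some ideal $L$ (so $L \not\subseteq Q$ while $L \subseteq P$ for every other $P \in \Min(J^*)$). If $Q \in K$, then intersecting with $K$ gives $\{Q\} = K \cap \D(L)$, so $Q$ is isolated in $K$, contradicting $\deriv(K) = K$. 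If $Q \notin K$, then $K \subseteq \Min(J^*) \setminus \{Q\} \subseteq \V(L)$, so $L \subseteq \bigcap_{P \in K} P = J^* \subseteq Q$, contradicting $L \not\subseteq Q$. The main subtlety is precisely that $\Min(J^*)$ may contain primes outside $K$, but any such ``extra'' $Q$ still satisfies $J^* \subseteq Q$, which is exactly what powers the contradiction in the second case.
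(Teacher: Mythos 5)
Your proof is correct and follows essentially the same route as the paper's: both take the perfect kernel $K=\bigcap_\alpha\deriv^\alpha(\Min(I_0))$ (the paper calls it $X$), set $J^*=\bigcap_{P\in K}P$, observe $K\subseteq\Min(J^*)$, and derive a contradiction from an isolated point of $\Min(J^*)$ by splitting on whether it lies in $K$ or not --- the latter case being the key one, powered by the fact that every minimal prime of $J^*$ contains $J^*$. The only cosmetic difference is that the paper collapses your Case 1 into a one-line remark that an isolated point of $\Min(J^*)$ cannot lie in the perfect set $K$.
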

\begin{proof}
\ref{lemma:Min-scat-perf:scat} $\Longrightarrow$ \ref{lemma:Min-scat-perf:perf} is obvious. To show \ref{lemma:Min-scat-perf:perf} $\Longrightarrow$ \ref{lemma:Min-scat-perf:scat}, let $I$ be a radical ideal and let $X:=\bigcap_\alpha\deriv^\alpha(\Min(I))$: then, $X$ is perfect. Let $J:=\bigcap\{Q\mid Q\in X\}$; then, $I\subseteq J\subseteq P$ for all $P\in X$, and thus $X\subseteq\Min(J)$. We claim that $\Min(J)$ is perfect. Indeed, suppose not: then, it has an isolated point $P$, and $P$ cannot belong to $X$, since $X$ is perfect. Since $P$ is isolated, there is a finitely generated ideal $L$ such that $\D(L)\cap\Min(J)=\{P\}$; therefore, $L\nsubseteq P$ while
\begin{equation*}
L\subseteq\bigcap_{Q\in\Min(J)\setminus\{P\}}\subseteq\bigcap_{Q\in X}Q=J,
\end{equation*}
a contradiction. Thus $\Min(J)$ is perfect, as claimed.
\end{proof}

\begin{defin}
We say that $D$ is \emph{min-scattered} if $\Min(I)$ is a scattered space for every ideal $I$.
\end{defin}

\begin{prop}\label{prop:countable}
Let $D$ be a domain such that $\Spec(D)$ is countable. Then, $D$ is min-scattered.
\end{prop}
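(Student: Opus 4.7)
The plan is to reduce the statement to the classical fact that a countable compact Hausdorff space is scattered. First I would observe that $\Spec(D)$ endowed with the constructible topology is compact and Hausdorff, and by assumption it is countable.

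I would then argue that any countable compact Hausdorff space $X$ is scattered. The perfect kernel $X^\infty:=\bigcap_\alpha \deriv^\alpha(X)$ is closed (each derived set is closed as the complement of the open set of isolated points, and limit stages are intersections), hence compact Hausdorff; it is perfect by construction. If $X^\infty$ were nonempty it would be a countable perfect compact Hausdorff space. But in a compact Hausdorff space singletons are closed, and in a perfect space they are nowhere dense, so such a space would be a countable union of closed nowhere dense sets, contradicting the Baire category theorem. Hence $X^\infty=\emptyset$, i.e.\ $X$ is scattered; applied to $X=\Spec(D)$ with the constructible topology, this gives that $\Spec(D)$ is scattered in the constructible topology.

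Being scattered is hereditary: if $A\subseteq Y\subseteq X$ and $X$ is scattered, an isolated point of $A$ for the subspace topology from $X$ is automatically isolated for the subspace topology from $Y$, since both induce the same topology on $A$. Therefore, for every ideal $I$, the subspace $\Min(I)\subseteq\Spec(D)$ is scattered in the induced constructible topology. Finally, by the result recalled in the preliminaries, the Zariski and constructible topologies agree on $\Min(I)$, so $\Min(I)$ is scattered in the Zariski topology, which is exactly the definition of $D$ being min-scattered.

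I do not expect a genuine obstacle here: the only conceptual step is passing to the constructible topology, which supplies compactness and the Hausdorff property simultaneously so that Cantor--Bendixson (equivalently, Baire category) can be invoked; the transfer back to the Zariski topology on $\Min(I)$ is free. One could alternatively route the argument through Lemma \ref{lemma:Min-scat-perf}, showing that $\Min(I)$ is not perfect for each $I$, but the present approach avoids repeating the extraction of an isolated point for every individual $I$.
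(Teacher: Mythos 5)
Your proof is correct and follows essentially the same route as the paper: pass to the constructible topology to get a countable compact Hausdorff space, invoke the Mazurkiewicz--Sierpi\'nski theorem that such a space is scattered, use that scatteredness is hereditary, and transfer back to the Zariski topology on each $\Min(I)$ using that the two topologies agree there. The only difference is that you supply a Baire-category proof of the Mazurkiewicz--Sierpi\'nski fact, whereas the paper simply cites it.
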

\begin{proof}
The space $\Spec(D)$, endowed with the constructible topology, is Hausdorff, compact and countable, and thus scattered  \cite{mazur-sierp-numerabili}. Therefore, for every ideal $I$, also $\Min(I)$ is scattered, with respect to the constructible topology; however, on each $\Min(I)$ the constructible and the Zariski topology coincide. Hence $D$ is min-scattered.
\end{proof}

\begin{teor}\label{teor:rcc-radspectral}
Let $D$ be a rad-colon coherent domain. Then, the following are equivalent:
\begin{enumerate}[(i)]
\item\label{teor:rcc-radspectral:scat} $D$ is min-scattered;
\item\label{teor:rcc-radspectral:radical} every radical semistar operation is spectral.
\end{enumerate}
\end{teor}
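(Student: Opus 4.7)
My plan is to prove the two implications separately.

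For \ref{teor:rcc-radspectral:scat}$\Rightarrow$\ref{teor:rcc-radspectral:radical}, I will fix $\star\in\insrad(D)$ and set $\Delta:=\qspec^\star(D)$. One always has $\star\leq s_\Delta$ (each $ID_P$ for $P\in\Delta$ is $\star$-closed by stability, since $P\notin\mathcal{F}^\star$), so by combining Propositions \ref{prop:caratt-spectral} and \ref{prop:dense-closed} it will suffice to show, for every proper radical ideal $J$ with $J=J^\star\cap D$, that $\Delta\cap\Min(J)$ is dense in $\Min(J)$. Min-scatteredness ensures that every non-empty open $U\subseteq\Min(J)$ is scattered and hence has a point that is isolated in $U$, and since $U$ is open in $\Min(J)$, such a point is isolated in $\Min(J)$; so the problem reduces to showing that each isolated $P\in\Min(J)$ lies in $\Delta$. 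If $\Min(J)=\{P\}$ then $J=P$ and $P\in\Delta$ immediately; otherwise, setting $M:=\bigcap_{Q\in\Min(J)\setminus\{P\}}Q$, isolatedness of $P$ forces $M\not\subseteq P$ and $J=P\cap M$. Stability of $\star$ then gives $J=(P^\star\cap D)\cap(M^\star\cap D)$, and a short case analysis rules out $P^\star\cap D=D$ (which would force $M\subseteq P$) and $M^\star\cap D=D$ (which would force $J=P$, contradicting $|\Min(J)|>1$). Finally, if $P^\star\cap D$ strictly contained $P$, picking $y\in(P^\star\cap D)\setminus P$ would give $(P:_Dy)=P$ (as $P$ is prime), and the standard stability identity $y\in I^\star\iff(I:_Dy)\in\mathcal{F}^\star$ would yield $1\in P^\star$ and hence $P^\star\cap D=D$, contradicting the previous step. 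Thus $P^\star\cap D=P$, as required.

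For \ref{teor:rcc-radspectral:radical}$\Rightarrow$\ref{teor:rcc-radspectral:scat}, I argue by contrapositive. If $D$ fails to be min-scattered, then by Lemma \ref{lemma:Min-scat-perf} there is a proper radical ideal $J$ with $\Min(J)$ perfect, so in particular $|\Min(J)|\geq 2$. For each $P\in\Min(J)$ I set $\sharp_P:=s_{\Min(J)\setminus\{P\}}$; since $\Min(J)\setminus\{P\}$ is dense in the perfect space $\Min(J)$, Proposition \ref{prop:dense-closed} gives $J=J^{\sharp_P}\cap D$ for every $P$. I then let $\star:=\sup_P\sharp_P$, which lies in $\insrad(D)$ by Theorem \ref{teor:radical-compllattice}, and invoke Proposition \ref{prop:sup-rcc} (this is where rad-colon coherence is used) to deduce $J=J^\star\cap D$. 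On the other hand, for every prime $Q\supseteq J$, choosing $P\in\Min(J)$ with $P\subseteq Q$ and using incomparability of minimal primes of $J$, one has $Q\not\subseteq Q'$ for every $Q'\in\Min(J)\setminus\{P\}$ (else $P\subseteq Q'$ would force $P=Q'$); thus $Q\in\mathcal{F}^{\sharp_P}\subseteq\mathcal{F}^\star$, so $1\in Q^\star$ and $Q\notin\qspec^\star$. Hence $\V(J)\cap\qspec^\star=\emptyset$; were $\star$ spectral, Proposition \ref{prop:caratt-spectral} would force $J=J^\star\cap D=\bigcap\emptyset=D$, a contradiction. So $\star$ is a radical but non-spectral operation, contradicting \ref{teor:rcc-radspectral:radical}.

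The main technical point in the first direction is the case analysis pinning down $P^\star\cap D$, which rests on the dictionary between stability and the localizing system $\mathcal{F}^\star$. The main obstacle in the second direction is verifying that $J$ remains quasi-$\star$-closed under the supremum $\star$, and this is precisely where Proposition \ref{prop:sup-rcc} and the rad-colon coherent hypothesis come in.
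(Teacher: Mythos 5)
Your proposal is correct, and the overall structure matches the paper's. The direction \ref{teor:rcc-radspectral:radical}$\Rightarrow$\ref{teor:rcc-radspectral:scat} is essentially identical: both construct $\sharp_P = s_{\Min(J)\setminus\{P\}}$ for $\Min(J)$ perfect, use Proposition~\ref{prop:dense-closed} to get $J = J^{\sharp_P}\cap D$, invoke Proposition~\ref{prop:sup-rcc} (the one place rad-colon coherence enters), and observe that $\V(J)\cap\qspec^\star(D)=\emptyset$ so Proposition~\ref{prop:caratt-spectral} rules out spectrality.

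For \ref{teor:rcc-radspectral:scat}$\Rightarrow$\ref{teor:rcc-radspectral:radical}, your core insight is the same as the paper's: every isolated point of $\Min(J)$, for a quasi-$\star$-closed proper radical $J$, must lie in $\qspec^\star(D)$ (the paper phrases this as \emph{$\Gamma := \Min(I)\setminus\V(J)$ contains no isolated points of $\Min(I)$}, which is the contrapositive of your claim on the relevant open subset). However, you finish the topological step more cleanly. The paper considers the Cantor--Bendixson rank $\gamma(P)$ of each $P\in\Gamma$, takes the minimum, and derives a contradiction via a limit-point argument with the $\beta$-th derived set. You instead use the elementary fact that any nonempty open subset of a scattered space is itself scattered and therefore has an isolated point, which (being in an open subset) is isolated in the whole space --- so the isolated points of $\Min(J)$ are dense. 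Since $\Gamma$ is the intersection of $\Min(I)$ with the open set $\D(J)$, the paper's ordinal bookkeeping is doing exactly this in disguise; your version makes the argument transparent and avoids transfinite machinery. Your re-derivation of the dichotomy $P^\star\cap D\in\{P,D\}$ via the localizing-system identity $y\in I^\star\iff(I:_Dy)\in\mathcal{F}^\star$ is also a fine substitute for the citation of \cite[Lemma 3.1]{stable_prufer}. One small presentational point: when reducing to ``$\Delta\cap\Min(J)$ dense for quasi-$\star$-closed radical $J$'' suffices, you should note that an arbitrary radical $I$ is handled by passing to $J:=I^\star\cap D$ (which is radical by Proposition~\ref{prop:chius-rad}) and using $\Min(J)\cap\Delta\subseteq\V(I)\cap\Delta$ together with $\star\leq s_\Delta$; this is implicit in ``combining Propositions~\ref{prop:caratt-spectral} and~\ref{prop:dense-closed}'' but worth a sentence.
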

\begin{proof}
\ref{teor:rcc-radspectral:radical} $\Longrightarrow$ \ref{teor:rcc-radspectral:scat} Suppose that there is a radical ideal $I$ such that $\Min(I)$ is perfect. For every $P\in\Min(I)$, let $\sharp(P):=s_{\Min(I)\setminus\{P\}}$, and let $\star$ be the supremum of all these $\sharp(P)$. Then, $\star$ is a radical semistar operation; we claim that $\star$ is not spectral.

Indeed, since $\Min(I)$ is perfect each $\Min(I)\setminus\{P\}$ is dense, and thus by Proposition \ref{prop:caratt-spectral} $I=I^{\sharp(P)}\cap D$ for every $P$; since $D$ is rad-colon coherent, by Proposition \ref{prop:sup-rcc} we have $I=I^\sharp\cap D$. However, $P^{\sharp(P)}\ni 1$ for every $P\in\Min(I)$; hence, $1\in P^\sharp$ for every $P\in \V(I)$. By Proposition \ref{prop:caratt-spectral}, $\sharp$ cannot be spectral.

\ref{teor:rcc-radspectral:scat} $\Longrightarrow$ \ref{teor:rcc-radspectral:radical} Suppose that there is a radical operation $\star$ that is not spectral. By Proposition \ref{prop:caratt-spectral}, there is an ideal $I$ such that $I^\star\cap D\subsetneq\bigcap\{P\mid P\in \V(I)\cap\qspec^\star(D)\}$; without loss of generality we can suppose that $I=I^\star$. Let $J$ be equal to the intersection, and let $\Gamma:=\Min(I)\setminus \V(J)$. By construction, $\Gamma$ is nonempty.

The set $\Gamma$ does not contain isolated points of $\Min(I)$: if $Q\in\Gamma$ is isolated, then $I=Q\cap I_0$ for some $I_0\supsetneq I$, and thus $I^\star\cap D=(Q\cap I_0)^\star\cap D=Q^\star\cap I_0^\star\cap D$ can only be equal to $I$ if $Q=Q^\star\cap D$, i.e., $Q\in\qspec^\star(D)$ and $J\subseteq Q$.

For every $P\in\Gamma$, let $\gamma(P)$ be the minimal ordinal number such that $P\notin\deriv^\gamma(\Gamma)$. Note that $\gamma(P)$ exists since $\Min(I)$ is scattered and no element of $\Gamma$ is isolated; furthermore, $\gamma(P)$ is a successor ordinal. Let $\gamma$ be the minimal element of the set of all $\gamma(P)$, and let $Q\in\Min(I)$ be such that $\gamma(Q)=\gamma$. Let also $\beta$ be such that $\gamma=\beta+1$.

By construction, $Q$ is a limit point of $\Min(I)$, while $Q$ is isolated in $\deriv^\beta(\Min(I))$. Hence, $Q$ is a limit point of $\Min(I)\setminus\deriv^\beta(\Min(I))$. The latter set is contained in $\V(J)$, by definition of $Q$; since $\V(J)$ is closed, it follows that also $Q\in \V(J)$. This is a contradiction, and thus $\Gamma$ must be empty, i.e., there cannot be a radical non-spectral semistar operation. The claim is proved.
\end{proof}

We now restrict to the case of Pr\"ufer domains, extending results proved in \cite{stable_prufer} and mostly following the general method of that paper. Given a semistar operation $\star$ on the Pr\"ufer domain $D$, we define the \emph{pseudo-spectrum} $\psspec^\star(D)$ as the set of those prime ideals $Q$ such that $1\in Q^\star$, but there is a $Q$-primary ideal $L$ such that $L=L^\star\cap D$. Using the quasi-spectrum and the pseudo-spectrum, we can define from $\star$ a new semistar operation $\stdstable{\star}$, called the \emph{normalized stable version} of $D$, as
\begin{equation*}
\stdstable{\star}:I\mapsto\bigcap_{P\in\qspec^\star(D)}ID_P\cap\bigcap_{Q\in\psspec^\star(D)}(ID_Q)^{v_Q},
\end{equation*}
where $v_Q$ is the $v$-operation on the valuation domain $D_Q$.  Note that $v_P$ is different from the identity on $D_P$ if and only $P$ is idempotent.

\begin{lemma}\label{lemma:radical-stdstable}
Let $\star$ be a radical semistar operation. Then, $\star=\stdstable{\star}$ if and only if $\star$ is spectral.
\end{lemma}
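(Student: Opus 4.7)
The plan is to reduce the biconditional to a single structural observation: for radical $\star$, the pseudo-spectrum $\psspec^\star(D)$ is forced to be empty, so $\stdstable{\star}$ collapses to the spectral operation $s_{\qspec^\star(D)}$, and from this the equivalence is immediate.

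First I would prove the key lemma-within-a-lemma: if $\star$ is quasi-radical, then $\psspec^\star(D)=\emptyset$. Suppose, for contradiction, that $Q\in\psspec^\star(D)$. By definition there exists a $Q$-primary ideal $L$ with $L=L^\star\cap D$. Since $Q$ is prime we have $Q\neq D$, and $L\subseteq\rad(L)=Q$, so $L\neq D$; therefore $L^\star\cap D=L\subsetneq D$ forces $1\notin L^\star$. Quasi-radicality then gives $1\notin\rad(L)^\star=Q^\star$, which contradicts the requirement $1\in Q^\star$ built into the definition of $\psspec^\star(D)$.

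With this in hand, the definition of the normalized stable version simplifies to
\begin{equation*}
\stdstable{\star}\colon I\mapsto\bigcap_{P\in\qspec^\star(D)}ID_P=s_{\qspec^\star(D)}(I),
\end{equation*}
so $\stdstable{\star}$ is, by inspection, the spectral operation associated to $\qspec^\star(D)$. This yields the ``only if'' direction immediately: if $\star=\stdstable{\star}$, then $\star=s_{\qspec^\star(D)}$ is spectral.

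For the ``if'' direction, assume $\star$ is spectral, so $\star=s_\Delta$ for some $\Delta\subseteq\Spec(D)$. Replacing $\Delta$ by $\Delta^\downarrow$ and using the identity $\qspec^{s_\Delta}(D)=\Delta^\downarrow$ recalled in Section \ref{sect:prelim}, we obtain $s_{\qspec^\star(D)}=s_{\Delta^\downarrow}=s_\Delta=\star$; combined with the collapse above this gives $\stdstable{\star}=s_{\qspec^\star(D)}=\star$. There is no real obstacle here: the entire content is the first step, which is just a direct unwinding of the definitions of $\psspec^\star(D)$ and of quasi-radicality; the role of the Pr\"ufer hypothesis (implicit from the ambient discussion that defines $\stdstable{\star}$ using the valuation $v$-operations $v_Q$) does not enter, because those $v_Q$ contributions are precisely the terms killed by the emptiness of the pseudo-spectrum.
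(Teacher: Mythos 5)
Your proof is correct and follows essentially the same route as the paper's: the content of both arguments is the observation that radicality (indeed quasi-radicality) forces $\psspec^\star(D)=\emptyset$, so that $\stdstable{\star}$ collapses to $s_{\qspec^\star(D)}$. The paper derives this implicitly via contradiction inside the ``converse'' direction, whereas you extract it as an explicit preliminary step, which makes both implications transparent; this is a cosmetic reorganization rather than a different proof.
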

\begin{proof}
If $\star$ is spectral, then $\star=s_{\qspec^\star(D)}=\stdstable{\star}$. Conversely, if $\star=\stdstable{\star}$ but $\star$ is not spectral, there is a $Q\in\psspec^\star(D)$. By definition, $1\in Q^\star$, while $1\notin L^\star$ for some $Q$-primary ideal $L$; since $\rad(L)=Q$, this contradicts the fact that $\star$ is radical. Hence $\star$ must be spectral.
\end{proof}

\begin{teor}\label{teor:prufer-MinI}
Let $D$ be a Pr\"ufer domain. Then, the following are equivalent:
\begin{enumerate}[(i)]
\item\label{teor:prufer-MinI:scat} $D$ is min-scattered;
\item\label{teor:prufer-MinI:radical} every radical semistar operation is spectral;
\item\label{teor:prufer-MinI:stdstable} $\star=\stdstable{\star}$ for every stable semistar operation $\star$.
\end{enumerate}
\end{teor}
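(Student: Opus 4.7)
The equivalence (i) $\iff$ (ii) is immediate from Theorem \ref{teor:rcc-radspectral}, since every Pr\"ufer domain is rad-colon coherent. The implication (iii) $\Rightarrow$ (ii) is also formal: a radical operation is in particular stable, so by (iii) it equals its normalized stable version, and Lemma \ref{lemma:radical-stdstable} then forces it to be spectral. The heart of the theorem is the remaining implication (i) $\Rightarrow$ (iii), which I would prove along the lines of the finite-$\Min$ result \cite[Theorem 4.5]{stable_prufer}, using the scattered structure of the minimal-prime spaces in place of finiteness.

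For (i) $\Rightarrow$ (iii), fix a stable $\star$ on $D$. My first step is to establish the inequality $\star\leq\stdstable{\star}$, which holds on any Pr\"ufer domain without scatteredness. For each $P\in\qspec^\star(D)$, the stability of $\star$ together with $P=P^\star\cap D$ forces the restriction of $\star$ to $\inssubmod(D_P)$ to be the identity, so $(ID_P)^\star=ID_P$ and hence $I^\star\subseteq ID_P$. For each $Q\in\psspec^\star(D)$, the structural analysis of \cite{stable_prufer} identifies the restriction of $\star$ to $\inssubmod(D_Q)$ with the $v_Q$-operation on the valuation domain $D_Q$, so $I^\star\subseteq(ID_Q)^{v_Q}$. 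Intersecting these inclusions over $\qspec^\star(D)\cup\psspec^\star(D)$ yields the inequality.

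For the reverse inequality, since both operations are stable it suffices to show that $1\notin I^\star$ implies $1\notin I^{\stdstable{\star}}$. Given such an $I$, set $J:=I^\star\cap D$, a proper quasi-$\star$-ideal, and $R:=\rad(J)$, so that $\Min(R)=\Min(J)$. The target claim is that some $P\in\Min(R)$ witnesses $1\notin I^{\stdstable{\star}}$: either $P\in\qspec^\star(D)$ (so $I\subseteq J\subseteq P$ forces $1\notin ID_P$), or $P\in\psspec^\star(D)$ with $(ID_P)^{v_P}\neq D_P$. Following the strategy of the proof of Theorem \ref{teor:rcc-radspectral}, let $\Gamma\subseteq\Min(R)$ be the set of minimal primes failing to witness the claim; the goal is to show $\Gamma=\emptyset$. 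The key step is to prove that $\Gamma$ contains no isolated point of $\Min(R)$: if $P\in\Gamma$ were isolated, the lemma preceding Lemma \ref{lemma:Min-scat-perf} gives a decomposition $R=P\cap R'$ with $R'\supsetneq R$, and stability yields $R^\star=P^\star\cap R'^\star$; combined with the dichotomy $P^\star\cap D\in\{P,D\}$ from \cite[Lemma 3.1]{stable_prufer}, this should force $P$ to lie in $\qspec^\star(D)\cup\psspec^\star(D)$ with the required nontriviality of $(ID_P)^{v_P}$, contradicting $P\in\Gamma$. An iterated argument on the derived sets $\deriv^\alpha(\Min(R))$, exactly mirroring the final paragraph of the proof of Theorem \ref{teor:rcc-radspectral} and exploiting min-scatteredness to conclude $\bigcap_\alpha\deriv^\alpha(\Min(R))=\emptyset$, then gives $\Gamma=\emptyset$.

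The main obstacle I expect is the ``free'' subcase of the isolated-point step: when $P\in\Gamma$ is isolated, $P\notin\qspec^\star(D)$ (so $P^\star\cap D=D$), and also $P\notin\psspec^\star(D)$ (no $P$-primary ideal is $\star$-closed), one must derive a contradiction from $R=P\cap R'$, $R^\star=R'^\star$, and the closedness of $J$. The intended contradiction is that this configuration would place a $P$-primary ideal among the $\star$-closed ideals, via a local analysis in the valuation domain $D_P$ using that $JD_P$ is $PD_P$-primary (because $P$ is minimal in $\Min(R)$ and $R$ is radical), which contradicts the definition of ``free''. The parallel obstruction of extracting $(ID_P)^{v_P}\neq D_P$ in the $\psspec^\star(D)$ subcase reduces to the same local computation, together with the observation that $ID_P\subseteq JD_P$ is $PD_P$-primary and therefore contained in a principal (and hence $v_P$-closed) proper $D_P$-ideal.
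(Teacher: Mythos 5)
Your reductions for (i)$\iff$(ii) and (iii)$\Rightarrow$(ii) match the paper, and the overall skeleton for (i)$\Rightarrow$(iii) --- first $\star\leq\stdstable{\star}$, then produce a witness $P\in\Min(\rad(I^\star\cap D))$ for $1\notin I^{\stdstable{\star}}$ whenever $1\notin I^\star$ --- is the right one. But there are two concrete problems in the execution.

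The derived-set induction you append at the end is both unnecessary and unjustified. Unnecessary, because the target is only $\Gamma\neq\Min(R)$, not $\Gamma=\emptyset$: once you know $\Gamma$ avoids isolated points of $\Min(R)$, min-scatteredness and Lemma \ref{lemma:Min-scat-perf} give that the nonempty space $\Min(R)$ is not perfect, so it \emph{has} an isolated point, which is then a witness and you are done. (This is exactly what the paper does: one isolated point and a direct contradiction.) Unjustified, because the final paragraph of the proof of Theorem \ref{teor:rcc-radspectral} that you want to mirror relies crucially on the complement of $\Gamma$ in $\Min(I)$ sitting inside a Zariski-closed set $\V(J)$; your $\Gamma$ is defined through membership in $\psspec^\star(D)$ and a $v_P$-closure condition, and you have not exhibited such a closed envelope, so the ``$Q$ is a limit point of $\V(J)$'' step has nothing to lean on.

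More seriously, the isolated-point step --- the actual crux --- is incomplete. You apply stability to $R=\rad(J)=P\cap R'$, but $R$ is not known to be quasi-$\star$-closed (Proposition \ref{prop:chius-rad} only gives that $R^\star\cap D$ is $D$ or radical), so $R^\star=P^\star\cap R'^\star$ yields no usable information; stability must be applied to the quasi-$\star$-closed ideal $J$ itself, which is what the paper does via $I=IS\cap IT$. And in the $\psspec^\star(D)$ subcase, your argument that $ID_P$ is ``$PD_P$-primary and therefore contained in a principal (and hence $v_P$-closed) proper $D_P$-ideal'' is simply false: $PD_P$ is $PD_P$-primary, yet when $P$ is idempotent it is contained in no principal proper ideal of $D_P$ --- and this is precisely the delicate configuration (which corresponds to $(ID_P)^{v_P}=D_P$) that makes the pseudo-spectrum case hard. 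The paper avoids re-deriving this by arranging for $IT\cap D$ to have prime radical $Q$ and then citing the local analysis already carried out in the proof of \cite[Theorem~4.5]{stable_prufer}; your sketch is essentially an attempt to reconstruct that lemma, and it is exactly where the reconstruction breaks.
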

\begin{proof}
\ref{teor:prufer-MinI:scat} $\iff$ \ref{teor:prufer-MinI:radical} follows from Theorem \ref{teor:rcc-radspectral}, since a Pr\"ufer domain is rad-colon coherent, while \ref{teor:prufer-MinI:stdstable} $\Longrightarrow$ \ref{teor:prufer-MinI:radical} follows from Lemma \ref{lemma:radical-stdstable}.

To prove \ref{teor:prufer-MinI:scat} $\Longrightarrow$ \ref{teor:prufer-MinI:stdstable}, fix a stable semistar operation $\star$. By \cite[Theorem 3.9]{stable_prufer}, $\star\leq\stdstable{\star}$, and thus if $1\in I^\star$ then also $1\in I^{\stdstable{\star}}$. Suppose that $1\in I^{\stdstable{\star}}$ while $1\notin I^\star$. Then, $J:=I^\star\cap D$ is a proper ideal of $D$ that is quasi-$\star$-closed. Changing notation from $I$ to $J$, we can suppose without loss of generality that $I=I^\star\cap D$.

Since $\Min(I)$ is not perfect, there is an isolated point $Q$. Since $\Min(I)\setminus\{Q\}$ is closed, it is equal to $\V(I_1)\cap\Min(I_1)$ for some radical ideal $I_1$. Let $T:=\bigcap\{D_P\mid P\in\V(Q)\}$ and $S:=\bigcap\{D_P\mid P\in\V(I_1)\}$: Then, $I=IS\cap IT$, and in particular
\begin{equation*}
I^\star\cap D=(IS)^\star\cap D\cap(IT)^\star\cap D.
\end{equation*}
The radical of $IT\cap D$ is $Q$, which is a prime ideal. By the proof of \cite[Theorem 4.5]{stable_prufer}, since $1\in (IT\cap D)^{\stdstable{\star}}$, we also have $1\in(IT\cap D)^\star$, and thus $(IT)^\star\cap D=D$. On the other hand, $IS\cap D$ is not contained in $Q$; hence, neither does $(IS)^\star\cap D$. By construction, $I=I^\star\cap D$; this is a contradiction, and thus $\star$ and $\stdstable{\star}$ must be equal, as claimed.
\end{proof}

\begin{cor}
Let $D$ be a domain such that $\Spec(D)$ is countable. Then, every radical semistar operation is spectral. If $D$ is Pr\"ufer, moreover, $\star=\stdstable{\star}$ for every stable semistar operation $\star$.
\end{cor}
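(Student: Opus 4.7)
The plan is to assemble this corollary directly from the results of the section. First I would invoke Proposition \ref{prop:countable} to conclude that $D$ is min-scattered, which converts the countability hypothesis into the form actually used by the structural theorems. From there, the first assertion should follow from the implication \ref{teor:rcc-radspectral:scat} $\Longrightarrow$ \ref{teor:rcc-radspectral:radical} of Theorem \ref{teor:rcc-radspectral}: every radical semistar operation on a min-scattered domain is spectral.

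A point worth flagging is that Theorem \ref{teor:rcc-radspectral} is stated under the blanket hypothesis of rad-colon coherence, while the corollary imposes no such condition on $D$ in its first assertion. Inspecting the proof of Theorem \ref{teor:rcc-radspectral} shows that rad-colon coherence is used only in the direction \ref{teor:rcc-radspectral:radical} $\Longrightarrow$ \ref{teor:rcc-radspectral:scat} (through Proposition \ref{prop:sup-rcc}, in the construction of the supremum of the $\sharp(P)$); the converse implication, which is the one we need, relies solely on the scatteredness of $\Min(I)$, Proposition \ref{prop:caratt-spectral}, and the transfinite induction on the derived set of $\Gamma=\Min(I)\setminus\V(J)$. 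Consequently no extra assumption on $D$ is needed to conclude the first claim.

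For the second assertion, a Pr\"ufer domain is automatically rad-colon coherent, so Theorem \ref{teor:prufer-MinI} is available in full, and its equivalence \ref{teor:prufer-MinI:scat} $\iff$ \ref{teor:prufer-MinI:stdstable} together with the min-scatteredness obtained in the first step immediately yields $\star=\stdstable{\star}$ for every stable semistar operation $\star$. The only real subtlety in the argument, and thus what I would call the main obstacle, is precisely the verification that the relevant direction of Theorem \ref{teor:rcc-radspectral} survives without the rad-colon coherence hypothesis; beyond that, the proof is a brief combination of earlier results.
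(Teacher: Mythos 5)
Your proof is correct and takes the same route as the paper: reduce countability to min-scatteredness via Proposition \ref{prop:countable}, then invoke Theorems \ref{teor:rcc-radspectral} and \ref{teor:prufer-MinI}. Your additional observation is a genuine and worthwhile one: the paper's corollary does not assume rad-colon coherence, yet its proof simply cites Theorem \ref{teor:rcc-radspectral}, whose hypothesis does; your inspection that the implication \ref{teor:rcc-radspectral:scat}~$\Longrightarrow$~\ref{teor:rcc-radspectral:radical} uses only Proposition \ref{prop:caratt-spectral} and the transfinite argument on derived sets (and not Proposition \ref{prop:sup-rcc}, which is where rad-colon coherence enters) is accurate and closes the gap the paper leaves implicit.
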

\begin{proof}
If $\Spec(D)$ is countable, then $D$ is min-scattered by Proposition \ref{prop:countable}. The claims now follow from Theorems \ref{teor:rcc-radspectral} and \ref{teor:prufer-MinI}.
\end{proof}

The following is a version of Theorem \ref{teor:insrad-iso} for stable operations on a Pr\"ufer domain; it can also be seen as a variant of \cite[Theorem 5.12]{length-funct} (in view of \cite[Section 6]{length-funct}).
\begin{teor}\label{teor:prufer-iso}
Let $D_1,D_2$ be Pr\"ufer domains. Suppose that there is a homeomoprhism $\phi:\Spec(D_1)\longrightarrow\Spec(D_2)$ such that a prime ideal $P$ is idempotent if and only if $\phi(P)$ is idempotent. If $D_1$ is min-scattered, then there is an isomorphism $\Phi:\insstable(D_1)\longrightarrow\insstable(D_2)$.
\end{teor}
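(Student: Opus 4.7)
The plan is to use Theorem \ref{teor:prufer-MinI} to parametrize each $\insstable(D_i)$ by the pair $(\qspec^\star(D_i),\psspec^\star(D_i))$ of the normalized stable version, and then to transport this data along $\phi$. First, I would verify that $D_2$ is also min-scattered: given an ideal $J$ of $D_2$, its Zariski-closed set $\V(J)$ pulls back along $\phi$ to $\V(I)$ for some ideal $I$ of $D_1$, and $\phi|_{\V(I)}:\V(I)\to\V(J)$ is a homeomorphism. Since $\phi$ is an order isomorphism, it maps minimal primes of $I$ bijectively to minimal primes of $J$; as scatteredness is topological, $\Min(J)$ inherits it from $\Min(I)$. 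Hence, by Theorem \ref{teor:prufer-MinI} applied on both sides, every stable operation $\star$ on $D_i$ satisfies $\star=\stdstable{\star}$ and is uniquely determined by its pair $(\qspec^\star(D_i),\psspec^\star(D_i))$.

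Next I would define $\Phi$ by transport. For $\star\in\insstable(D_1)$, set $\Delta:=\qspec^\star(D_1)$ and $\Sigma:=\psspec^\star(D_1)$, and let
\begin{equation*}
\Phi(\star):I\longmapsto \bigcap_{P\in\phi(\Delta)}I(D_2)_P\cap\bigcap_{Q\in\phi(\Sigma)}(I(D_2)_Q)^{v_Q}.
\end{equation*}
This is a stable semistar operation on $D_2$, being an intersection of stable operations; a symmetric definition using $\phi^{-1}$ produces a candidate inverse $\Psi$. To verify that $\Phi$ is the claimed isomorphism, I would check the identities $\qspec^{\Phi(\star)}(D_2)=\phi(\Delta)$ and $\psspec^{\Phi(\star)}(D_2)=\phi(\Sigma)$; combined with $\Phi(\star)=\stdstable{\Phi(\star)}$ (by min-scatteredness of $D_2$), these force $\Phi\circ\Psi=\mathrm{id}$ and $\Psi\circ\Phi=\mathrm{id}$. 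Order preservation follows because, under this parametrization, the comparison $\star_1\leq\star_2$ on $D_1$ is encoded in the pairs $(\qspec,\psspec)$ in a manner that only involves the partial order on $\Spec(D_1)$ and the $v$-operations at idempotent primes, both respected by $\phi$.

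The main technical obstacle is the pseudo-spectrum computation, where the idempotency-preserving hypothesis on $\phi$ is essential. If $Q$ is a non-idempotent prime of $D_1$, then $(D_1)_Q$ is a valuation domain whose maximal ideal is principal, so $v_Q$ is trivial on fractional ideals and $(I(D_1)_Q)^{v_Q}=I(D_1)_Q$; consequently $Q$ contributes nothing to the normalized formula beyond what the quasi-spectral part provides. Only idempotent primes in $\psspec^\star(D_1)$ truly matter, and the assumption that $\phi$ preserves idempotency guarantees that their images in $D_2$ are exactly the primes whose $v$-operations are likewise nontrivial. Without this hypothesis, an idempotent prime of $D_1$ could be sent to a non-idempotent prime of $D_2$ (or vice versa), so the transported pseudo-spectrum would have the wrong effect and bijectivity would fail; with the hypothesis, the bijection between pseudo-spectra translates correctly into the desired isomorphism of the lattices $\insstable(D_i)$.
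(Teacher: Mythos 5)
Your proposal follows the same strategy as the paper: verify that $D_2$ inherits min-scatteredness, define $\Phi$ by transporting the quasi-spectrum and pseudo-spectrum along $\phi$ and re-intersecting (exactly the formula the paper uses), build the candidate inverse $\Psi$ symmetrically, and then use Theorem \ref{teor:prufer-MinI} to conclude that $\Psi\circ\Phi(\star)=\stdstable{\star}=\star$ once the quasi- and pseudo-spectra of $\Phi(\star)$ are identified. The opening transfer of min-scatteredness and the final composition argument are essentially identical to the paper's.

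The one genuine gap is that the identities $\qspec^{\Phi(\star)}(D_2)=\phi(\qspec^\star(D_1))$ and $\psspec^{\Phi(\star)}(D_2)=\phi(\psspec^\star(D_1))$ are stated but not actually proved; in the paper they occupy most of the argument. The nontrivial direction for the quasi-spectrum needs the facts (from \cite{stable_prufer}, Proposition 3.4) that the quasi-spectrum is closed under generizations and that every prime strictly below a pseudo-spectral prime is quasi-spectral; the pseudo-spectrum direction needs to show that branchedness of $P$ in $(D_1)_P$ is reflected by $\phi(P)$ in $(D_2)_{\phi(P)}$ (via the induced homeomorphism of local spectra) so that a $\phi(P)$-primary ideal $L'\subsetneq\phi(P)$ actually exists. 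Without these, ``I would check the identities'' does not constitute a proof. Your observation that the idempotency-preserving hypothesis is what keeps the $v$-operations matched across $\phi$ is correct and is a useful gloss on a point the paper leaves mostly implicit; one small sharpening: for a normalized stable $\star=\stdstable{\star}$, a non-idempotent prime cannot appear in $\psspec^\star(D)$ at all (otherwise it would already be quasi-spectral), so the pseudo-spectrum consists only of idempotent primes, which is exactly why $\phi$ preserving idempotency is the right hypothesis.
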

\begin{proof}
We first note that if $J$ is an ideal of $D_2$, then $\Min(J)$ is the set of minimal elements of the closed set $\V(J)$; then, $\phi^{-1}(\V(J))$ is a closed set of $\Spec(D_1)$, and thus it is equal to $\V(J')$ for some ideal $J'$ of $D_1$. By hypothesis, $\Min(J')$ is scattered, and thus also $\phi(\Min(J'))=\Min(J)$ is scattered. Hence also $D_2$ is min-scattered.

Given a stable semistar operation $\star$ on $D_1$, we define $\Phi(\star)$ as the map
\begin{equation*}
\Phi(\star):I\mapsto\bigcap_{P\in\phi(\qspec^\star(D_1))}I(D_2)_P\cap\bigcap_{Q\in\phi(\psspec^\star(D_1))}(I(D_2)_Q)^{v_Q}.
\end{equation*}
We claim that $\qspec^{\Phi(\star)}(D_2)=\phi(\qspec^\star(D_1))$ and $\psspec^{\Phi(\star)}(D_2)=\phi(\psspec^\star(D_1))$.

Indeed, let $P\in\Spec(D_1)$ and let $Q:=\phi(P)$. If $P\in\qspec^\star(D_1)$, then
\begin{equation*}
Q^{\Phi(\star)}\cap D\subseteq Q(D_2)_Q\cap D_2=Q,
\end{equation*}
and thus $Q\in\qspec^{\Phi(\star)}(D_2)$; conversely, if $Q\in\qspec^{\Phi(\star)}(D_2)$, then either $Q(D_2)_A\neq (D_2)_A$ for some $A\in\phi(\qspec^\star(D_1))$ or $(Q(D_2)_B)^{v_B}\neq Q(D_2)_B$ for some $B\in\phi(\psspec^\star(D_1))$. In the former case, $P\subseteq A$; since the quasi-spectrum is closed by generizations \cite[Proposition 3.4(a)]{stable_prufer}, $P\in\qspec^\star(D_1)$ and thus $Q\in\phi(\qspec^\star(D_1))$. In the latter case it must be $Q\subsetneq B$, and thus $Q\in\phi(\qspec^\star(D_1))$ since every $B'\subsetneq B$ is in the quasi-spectrum  \cite[Proposition 3.4(b)]{stable_prufer}. Therefore, $\qspec^{\Phi(\star)}(D_2)=\phi(\qspec^\star(D_1))$.

Suppose now that $P\in\psspec^\star(D_1)$. By the previous paragraph, $Q\notin\qspec^{\Phi(\star)}(D_1)$. There is a $P$-primary ideal $L\subsetneq P$ such that $L=L^\star\cap D$; in particular, $P$ is not branched in the valuation domain $(D_1)_P$. The map $\phi$ induced a homeomorphism between $\Spec((D_1)_P)$ and $\Spec((D_2)_Q)$; therefore, neither $Q$ is branched, and thus there exist a $Q$-primary ideal $L'\subsetneq Q$. By definition,
\begin{equation*}
(L')^{\Phi(\star)}\cap D_2\subseteq L'(D_2)_Q\cap D_2=L',
\end{equation*}
and thus $Q\in\psspec^{\Phi(\star)}(D_2)$. Conversely, if $Q\in\psspec^{\Phi(\star)}(D_2)$, then there is a $Q$-primary ideal $L\subsetneq Q$ such that $L^{\Phi(\star)}\cap D_2=L$. By the previous part of the proof, $P\notin\qspec^\star(D_1)$; if $P$ is not even in $\psspec^\star(D_1)$, then $L^{\Phi(\star)}$ would just be equal to $D^{\Phi(\star)}$, a contradiction. Hence, $Q=\phi(P)\in\phi(\psspec^\star(D_1))$. Therefore, $\psspec^{\Phi(\star)}(D_2)=\phi(\psspec^\star(D_1))$.

Consider now the map $\Psi:\insstable(D_2)\longrightarrow\insstable(D_1)$ defined by
\begin{equation*}
\Psi(\sharp):I\mapsto\bigcap_{P\in\phi^{-1}(\qspec^\sharp(D_2))}I(D_1)_P\cap\bigcap_{Q\in\phi^{-1}(\psspec^\sharp(D_2))}(I(D_1)_Q)^{v_Q}
\end{equation*}
for every ideal $I$ of $D_1$ and every $\sharp\in\insstable(D_2)$. Then, $\Psi$ is the map associated to the homeomorphism $\phi^{-1}$ by the previous construction; hence,
\begin{align*}
\Psi\circ\Phi(\star):I& \mapsto \bigcap_{P\in\phi^{-1}(\qspec^{\Phi(\star)}(D_2))}I(D_1)_P\cap\bigcap_{Q\in\phi^{-1}(\psspec^{\Phi(\star)}(D_2))}(I(D_1)_Q)^{v_Q}=\\
&= \bigcap_{P\in\qspec^\star(D_1)}I(D_1)_P\cap\bigcap_{Q\in\psspec^\star(D_1)}(I(D_1)_Q)^{v_Q}=I^{\stdstable{\star}}
\end{align*}
since $\phi$ is a homeomorphism. By Theorem \ref{teor:prufer-MinI}, $\stdstable{\star}=\star$, and thus $\Psi\circ\Phi(\star)=\star$, i.e., $\Psi\circ\Phi$ is the identity on $\insstable(D_1)$. By symmetry, also $\Phi\circ\Psi$ is the identity on $\insstable(D_2)$; hence, $\Phi$ and $\Psi$ are inverses one of each other. It is straightforward to see that they are also order-preserving; thus, they establish a isomorphism between $\insstable(D_1)$ and $\insstable(D_2)$, as claimed.
\end{proof}

\begin{cor}
Let $D_1,D_2$ be Pr\"ufer domains with countable spectrum. Suppose that there is a homeomoprhism $\phi:\Spec(D_1)\longrightarrow\Spec(D_2)$ such that a prime ideal $P$ is idempotent if and only if $\phi(P)$ is idempotent. Then, there is an isomorphism $\Phi:\insstable(D_1)\longrightarrow\insstable(D_2)$.
\end{cor}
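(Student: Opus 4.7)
The plan is to observe that this corollary is essentially an immediate combination of two results already established in the paper: Proposition \ref{prop:countable} (which reduces countability of the spectrum to the min-scattered property) and Theorem \ref{teor:prufer-iso} (which produces the isomorphism under the min-scattered hypothesis). So the proof should require no new ideas, only a short verification that the hypotheses of Theorem \ref{teor:prufer-iso} are met.

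First, I would invoke Proposition \ref{prop:countable} applied to $D_1$: since $\Spec(D_1)$ is countable, $D_1$ is min-scattered. At this point all the hypotheses of Theorem \ref{teor:prufer-iso} are in place, namely: both $D_1$ and $D_2$ are Pr\"ufer domains, there is a homeomorphism $\phi\colon\Spec(D_1)\to\Spec(D_2)$ preserving idempotency of prime ideals, and $D_1$ is min-scattered. Applying the theorem yields the desired order isomorphism $\Phi\colon\insstable(D_1)\longrightarrow\insstable(D_2)$, concluding the proof.

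There is no real obstacle here; the only thing worth noting is that the countability hypothesis on $\Spec(D_2)$ is not strictly needed for Theorem \ref{teor:prufer-iso} to apply (since the first step of the proof of that theorem already shows $D_2$ is min-scattered whenever $D_1$ is, via the homeomorphism $\phi$). Thus the countability hypothesis on $\Spec(D_2)$ is redundant in the statement, but it is a natural symmetric assumption and does no harm. The entire proof will therefore be a one- or two-sentence citation of the two results above.
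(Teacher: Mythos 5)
Your proposal is correct and follows exactly the paper's own argument: apply Proposition \ref{prop:countable} to get min-scattered, then invoke Theorem \ref{teor:prufer-iso}. Your side observation that countability of $\Spec(D_2)$ is redundant is also accurate, since the first paragraph of the proof of Theorem \ref{teor:prufer-iso} already deduces that $D_2$ is min-scattered from $D_1$ being so via $\phi$.
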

\begin{proof}
If $\Spec(D_1),\Spec(D_2)$ are countable, then $D_1,D_2$ are min-scattered by Proposition \ref{prop:countable}. The claim now follows from Theorem \ref{teor:prufer-iso}.
\end{proof}

\section{Other versions}\label{sect:other}
Stable semistar operations are linked to two other structures on a ring: localizing systems and length functions.

A \emph{localizing system} on a domain $D$ is a set of ideals $\mathcal{F}$ such that:
\begin{itemize}
\item if $I\in\mathcal{F}$ and $I\subseteq J$, then $J\in\mathcal{F}$;
\item if $I\in\mathcal{F}$ and $(J:_DiD)\in\mathcal{F}$ for all $i\in I$, then $J\in\mathcal{F}$.
\end{itemize}
The map $\star\mapsto\mathcal{F}^\star:=\{I\mid 1\in I^\star\}$ establishes a bijective correspondence between the set of stable semistar operations and the set of all localizing systems, whose inverse is given by the map associating to $\mathcal{F}$ the semistar operation \cite[Section 2]{localizing-semistar}
\begin{equation*}
\star_\mathcal{F}:I\mapsto\bigcup_{J\in\mathcal{F}}(I:_DJ).
\end{equation*}

We say that a localizing system is \emph{radical} if, for every ideal $I$ such that $\rad(I)\in\mathcal{F}$, we have $I\in\mathcal{F}$. This notion corresponds exactly to radical semistar operations.
\begin{prop}
Let $\star$ be a stable semistar operation. Then, $\star$ is radical if and only if $\mathcal{F}^\star$ is a radical localizing system. 
\end{prop}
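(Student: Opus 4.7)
The plan is to unravel the two definitions side by side and observe that each is the contrapositive of the other. The key translation is that, by the very definition of $\mathcal{F}^\star$, the statement "$I\in\mathcal{F}^\star$" is literally "$1\in I^\star$", so the defining condition of a radical localizing system, namely $\rad(I)\in\mathcal{F}^\star\Rightarrow I\in\mathcal{F}^\star$, is logically equivalent (by contrapositive) to $1\notin I^\star\Rightarrow 1\notin\rad(I)^\star$, which is exactly the quasi-radical condition on $\star$.

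For the forward direction, I would assume $\star$ is radical (hence in particular quasi-radical) and take an ideal $I$ with $I\notin\mathcal{F}^\star$, i.e., $1\notin I^\star$. Quasi-radicality yields $1\notin\rad(I)^\star$, so $\rad(I)\notin\mathcal{F}^\star$; the contrapositive gives that $\mathcal{F}^\star$ is radical. For the reverse direction, $\star$ is already stable by hypothesis, so I only need to verify quasi-radicality: starting from $1\notin I^\star$ (so $I\notin\mathcal{F}^\star$), the radicality of $\mathcal{F}^\star$ applied contrapositively gives $\rad(I)\notin\mathcal{F}^\star$, i.e., $1\notin\rad(I)^\star$, which is quasi-radicality.

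There is essentially no obstacle here: the entire content is the dictionary $I\in\mathcal{F}^\star\iff 1\in I^\star$ together with the bijection between stable operations and localizing systems recalled just above the statement. The only thing to be mildly careful about is to state explicitly that stability of $\star$ is preserved on both sides (which is built into both definitions), so that the equivalence is genuinely between "radical semistar operation" and "radical localizing system" and not merely between the two one-sided conditions.
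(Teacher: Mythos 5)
Your proof is correct and takes essentially the same approach as the paper: both directions amount to unravelling the definitions of radical semistar operation and radical localizing system via the dictionary $I\in\mathcal{F}^\star\iff 1\in I^\star$ and passing to the contrapositive.
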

\begin{proof}
If $\star$ is radical and $\rad(I)\in\mathcal{F}^\star$, then $1\in\rad(I)^\star$ and thus $1\in I^\star$ by definition, so that $I\in\mathcal{F}$ and $\mathcal{F}^\star$ is radical. Conversely, if $\star$ is not radical there is an ideal $I$ such that $1\notin I^\star$ while $1\in\rad(I)^\star$: then, $\rad(I)\in\mathcal{F}^\star$ while $I\notin\mathcal{F}^\star$, so that $\mathcal{F}^\star$ is not radical.
\end{proof}

Therefore, the bijection between stable operations and localizing systems restricts to a bijection between $\insrad(D)$ and the set $\mathrm{LS}_{\mathrm{rad}}(D)$ of radical localizing systems; it follows that Theorems \ref{teor:insrad-iso} and \ref{teor:rcc-radspectral} can be expressed also in the terminology of localizing systems.

\medskip

A \emph{singular length function} on $D$ is a map $\ell$ from the set of all $D$-modules to $\{0,\infty\}$ such that:
\begin{itemize}
\item $\ell$ is additive on exact sequences;
\item for each module $M$, $\ell(M)$ is the supremum of $\ell(N)$, as $N$ ranges among the finitely generated submodules of $M$.
\end{itemize}
A singular length function is uniquely determined by its \emph{ideal colength} $\tau$, where $\tau$ is the map associated to each ideal $I$ the length $\ell(D/I)$ \cite[Proposition 3.3]{zanardo_length}. We denote by $\mathcal{L}_{\mathrm{sing}}(D)$ the set of singular length functions on $D$.

There is a bijective correspondence between localizing systems and singular length functions, where we associate to a localizing system $\mathcal{F}$ the colength \cite[Section 6]{length-funct}
\begin{equation*}
\tau_\mathcal{F}(I)=\begin{cases}
0 & \text{if~}I\in\mathcal{F},\\
\infty & \text{if~}I\notin\mathcal{F}.\\
\end{cases}
\end{equation*}

We say that a length function $\ell$ with associated colength $\tau$ is \emph{radical} if $\tau(I)=\tau(\rad(I))$ for every ideal $I$. This definition corresponds to radical semistar operations and radical localizing systems.
\begin{prop}
Let $\mathcal{F}$ be a localizing system. Then, $\mathcal{F}$ is radical if and only if the associated length function $\ell_\mathcal{F}$ is radical.
\end{prop}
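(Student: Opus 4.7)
The plan is to unwind the definitions on both sides and observe that the correspondence $\mathcal{F}\mapsto\tau_\mathcal{F}$ is simply the characteristic function of $\mathcal{F}$ valued in $\{0,\infty\}$, so the two radicality conditions translate into each other almost tautologically. The only non-formal ingredient I will need is the upward closure of a localizing system under inclusion of ideals, which forces $\rad(I)\in\mathcal{F}$ whenever $I\in\mathcal{F}$ since $I\subseteq\rad(I)$.

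For the forward direction, I assume $\mathcal{F}$ is radical and want to show $\tau_\mathcal{F}(I)=\tau_\mathcal{F}(\rad(I))$ for every ideal $I$. I would split into two cases. If $I\in\mathcal{F}$, then by upward closure $\rad(I)\in\mathcal{F}$, so both colengths are $0$. If $I\notin\mathcal{F}$, then the radicality of $\mathcal{F}$ gives $\rad(I)\notin\mathcal{F}$ (otherwise $I$ would belong to $\mathcal{F}$), so both colengths are $\infty$. In either case the equality holds, hence $\ell_\mathcal{F}$ is radical.

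For the converse, I assume $\tau_\mathcal{F}(I)=\tau_\mathcal{F}(\rad(I))$ for every ideal $I$ and want to deduce that $\mathcal{F}$ is radical. If $\rad(I)\in\mathcal{F}$, then $\tau_\mathcal{F}(\rad(I))=0$, so by the hypothesis $\tau_\mathcal{F}(I)=0$, which by the very definition of $\tau_\mathcal{F}$ means $I\in\mathcal{F}$. Thus $\mathcal{F}$ is radical.

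There is no genuine obstacle here: the whole content of the proposition is that the bijection $\mathcal{F}\leftrightarrow\tau_\mathcal{F}$ transports one notion to the other, and since $\tau_\mathcal{F}$ only takes two values the condition $\tau(I)=\tau(\rad(I))$ collapses exactly to the biconditional $I\in\mathcal{F}\iff\rad(I)\in\mathcal{F}$, whose non-trivial implication is the definition of a radical localizing system.
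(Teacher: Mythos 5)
Your proof is correct and follows essentially the same route as the paper: both unwind the definitions, use the upward closure of a localizing system to get $I\in\mathcal{F}\Rightarrow\rad(I)\in\mathcal{F}$, and note that since $\tau_\mathcal{F}$ takes only the two values $0$ and $\infty$, the condition $\tau_\mathcal{F}(I)=\tau_\mathcal{F}(\rad(I))$ is exactly the biconditional $I\in\mathcal{F}\iff\rad(I)\in\mathcal{F}$.
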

\begin{proof}
If $I$ is an ideal, by definition $\tau(I)=0$ if and only if $I\in\mathcal{F}$. Therefore, $\tau(I)=\tau(\rad(I))=0$ if and only if $I,\rad(I)\in\mathcal{F}$, while $\tau(I)=\tau(\rad(I))=\infty$ if and only if $I,\rad(I)\notin\mathcal{F}$. The claim follows.
\end{proof}

Let $D$ be a Pr\"ufer domain. To every singular length function $\ell$ with associated colength $\tau$ we can associate the space
\begin{equation*}
\Sigma(\ell):=\{P\in\Spec(D)\mid \tau(Q)>0\text{~for some~}P\text{-primary ideal~}Q\},
\end{equation*}
and the length function
\begin{equation*}
\ell^\sharp:=\sum_{P\in\Sigma(\ell)}\ell\otimes D_P
\end{equation*}
(where $(\ell\otimes D_P)(M):=\ell(M\otimes D_P)$). Then, we get an analogue of Theorem \ref{teor:prufer-MinI}.
\begin{prop}\label{prop:prufer-length-MinI}
Let $D$ be a Pr\"ufer domain. The following are equivalent:
\begin{enumerate}[(i)]
\item\label{prop:prufer-length-Min:scat} $D$ is min-scattered;
\item\label{prop:prufer-length-Min:stdstable} $\ell=\ell^\sharp$ for every singular length function $\ell$.
\end{enumerate}
\end{prop}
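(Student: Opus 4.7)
The plan is to translate the statement into one about stable semistar operations, so that Theorem \ref{teor:prufer-MinI} applies directly. Via the bijections recalled earlier in this section between singular length functions, localizing systems, and stable semistar operations, the proof reduces to showing that $\ell \mapsto \ell^\sharp$ corresponds, under these bijections, to the normalization $\star \mapsto \stdstable{\star}$. Once that dictionary is in place, the equivalence of \ref{prop:prufer-length-Min:scat} and \ref{prop:prufer-length-Min:stdstable} follows from the equivalence \ref{teor:prufer-MinI:scat} $\iff$ \ref{teor:prufer-MinI:stdstable} of Theorem \ref{teor:prufer-MinI}.

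To establish the dictionary, I would fix a singular length function $\ell$ with colength $\tau$, localizing system $\mathcal{F}^\star$, and associated stable operation $\star$, and proceed in two steps. First, I would identify $\Sigma(\ell)=\qspec^\star(D)\cup\psspec^\star(D)$: since $\tau(Q)>0$ iff $1\notin Q^\star$, a prime $P$ lies in $\Sigma(\ell)$ iff some $P$-primary ideal $Q$ satisfies $1\notin Q^\star$, which splits into $1\notin P^\star$ (take $Q=P$, yielding $P\in\qspec^\star(D)$) and $1\in P^\star$ (which is the defining condition of $\psspec^\star(D)$); the reverse inclusions are immediate.

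Second, I would compute $\tau^\sharp(I)$. A cyclic $D$-submodule of $D_P/ID_P$ has the form $D\cdot(d/s+ID_P)$ with annihilator $(ID_P:_Dd)\supseteq ID_P\cap D$; combined with the upward closure of $\mathcal{F}^\star$, this yields $\ell(D_P/ID_P)=0$ iff $ID_P\cap D\in\mathcal{F}^\star$, and therefore $\tau^\sharp(I)=0$ iff $ID_P\cap D\in\mathcal{F}^\star$ for every $P\in\Sigma(\ell)$. For $P\in\qspec^\star(D)$ this condition simplifies to $ID_P=D_P$, since otherwise $ID_P\cap D\subseteq P\notin\mathcal{F}^\star$. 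For $P\in\psspec^\star(D)$ I would argue that it is equivalent to $(ID_P)^{v_P}=D_P$, by analyzing the restriction of $\star$ to the valuation overring $D_P$ and exploiting the structure of stable operations there (in the spirit of \cite[Sections 3--4]{stable_prufer}) to identify the $P$-primary ideals of $\mathcal{F}^\star$ with those whose extension to $D_P$ has $v_P$-closure $D_P$. Combining both cases gives $\tau^\sharp(I)=0$ iff $1\in I^{\stdstable{\star}}$, completing the dictionary.

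The hardest step will be the $\psspec^\star(D)$ case of the colength computation: pinning down exactly which $P$-primary ideals lie in $\mathcal{F}^\star$ via the $v_P$-operation. This is essentially a localized form of the implication \ref{teor:prufer-MinI:scat}$\Rightarrow$\ref{teor:prufer-MinI:stdstable} of Theorem \ref{teor:prufer-MinI}, and it is here that the Pr\"ufer hypothesis plays an essential role; the remaining ingredients are formal consequences of the bijections established at the beginning of the section.
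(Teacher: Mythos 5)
Your high-level strategy — transport the question across the bijection between $\insstable(D)$ and $\mathcal{L}_{\mathrm{sing}}(D)$ and apply Theorem \ref{teor:prufer-MinI} — is exactly the paper's. The difference is that the paper's proof is two lines: it records that the composite bijection $\Phi$ satisfies $\Phi(\stdstable{\star})=\Phi(\star)^\sharp$, citing \cite[Proposition 6.8]{length-funct}, and then the equivalence drops out. You instead set out to reprove that compatibility from first principles.

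Your sketch of that reproof is mostly sound: the computation of $\ell(D_P/ID_P)$ via cyclic submodules, the reduction to ``$ID_P\cap D\in\mathcal{F}^\star$,'' and the $\qspec^\star(D)$ case are all correct. Two points are glossed, though. First, the identification $\Sigma(\ell)=\qspec^\star(D)\cup\psspec^\star(D)$ is not quite automatic: membership in $\psspec^\star(D)$ requires a $P$-primary ideal $L$ with $L=L^\star\cap D$, not merely some $P$-primary $Q$ with $1\notin Q^\star$; one must verify (using the Pr\"ufer hypothesis) that $Q^\star\cap D$ is again $P$-primary. Second, and more seriously, the $\psspec^\star(D)$ case of the colength computation — showing that a $P$-primary ideal lies in $\mathcal{F}^\star$ exactly when its extension to $D_P$ has $v_P$-closure $D_P$ — is left as a sketch, and you yourself flag it as the hard step. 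That step is precisely the content of \cite[Proposition 6.8]{length-funct}; it is cleaner to cite it, as the paper does, rather than rederive it inside this proof.
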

\begin{proof}
Let $\Phi$ be the isomorphism between $\insstable(D)$ and $\mathcal{L}_{\mathrm{sing}}(D)$ obtained composing the bijections of the two with the set of localizing systems. The claim follows from Theorem \ref{teor:prufer-MinI} and the fact that $\Phi(\stdstable{\star})=\Phi(\star)^\sharp$ \cite[Proposition 6.8]{length-funct}.
\end{proof}

As a consequence, we also obtain a version of Theorem \ref{teor:prufer-iso} (compare with \cite[Theorem 5.12]{length-funct}).
\begin{teor}\label{teor:prufer-iso-length}
Let $D_1,D_2$ be Pr\"ufer domain. Suppose that there is a homeomoprhism $\phi:\Spec(D_1)\longrightarrow\Spec(D_2)$ such that a prime ideal $P$ is idempotent if and only if $\phi(P)$ is idempotent. If $D_1$ is min-scattered, then there is an isomorphism $\Phi:\mathcal{L}_{\mathrm{sing}}(D_1)\longrightarrow\mathcal{L}_{\mathrm{sing}}(D_2)$.
\end{teor}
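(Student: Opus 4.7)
The plan is to transfer the isomorphism provided by Theorem \ref{teor:prufer-iso} through the two bijections that connect singular length functions, localizing systems, and stable semistar operations. The key point is that all three of these correspondences are order-preserving, so an isomorphism of any one of the three lattices carries over to the other two.

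More precisely, I would first invoke the bijective correspondence $\star\mapsto\mathcal{F}^\star$ between $\insstable(D_i)$ and the set of localizing systems on $D_i$, and then compose with the bijection $\mathcal{F}\mapsto\ell_\mathcal{F}$ between localizing systems and singular length functions (which is the content of the discussion preceding Proposition \ref{prop:prufer-length-MinI}, via the colength $\tau_\mathcal{F}$). For $i=1,2$, this yields an order-isomorphism $\iota_i:\insstable(D_i)\longrightarrow\mathcal{L}_{\mathrm{sing}}(D_i)$. Since $D_1$ is min-scattered and $\phi$ preserves idempotency of primes, Theorem \ref{teor:prufer-iso} gives an order isomorphism $\Phi_0:\insstable(D_1)\longrightarrow\insstable(D_2)$. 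Composing, one sets
\begin{equation*}
\Phi:=\iota_2\circ\Phi_0\circ\iota_1^{-1}:\mathcal{L}_{\mathrm{sing}}(D_1)\longrightarrow\mathcal{L}_{\mathrm{sing}}(D_2),
\end{equation*}
which is an order isomorphism as a composition of order isomorphisms.

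The only step that requires a small check is that the two bijections in each $\iota_i$ are indeed order-preserving. For $\star\mapsto\mathcal{F}^\star$ this is explicit in Section \ref{sect:prelim}, where it is recorded that $\star_1\leq\star_2$ if and only if $\mathcal{F}^{\star_1}\subseteq\mathcal{F}^{\star_2}$; for $\mathcal{F}\mapsto\tau_\mathcal{F}$ the ordering on singular length functions is pointwise, and by construction $\tau_{\mathcal{F}_1}\leq\tau_{\mathcal{F}_2}$ exactly when $\mathcal{F}_1\supseteq\mathcal{F}_2$ (or the reverse, depending on the convention used in \cite{length-funct}), so the bijection is at worst an order anti-isomorphism, and composing two order anti-isomorphisms still yields an isomorphism of lattices. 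I expect that, as in Proposition \ref{prop:prufer-length-MinI}, no genuine obstacle arises here: the whole content of the theorem is already encoded in Theorem \ref{teor:prufer-iso}, and the length-function statement is a formal translation. The main subtlety to watch is keeping the direction of the order consistent through the two bijections, but this is a routine bookkeeping matter rather than a substantive difficulty.
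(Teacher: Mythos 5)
Your proposal is correct and is precisely the argument the paper intends: the theorem is stated with no separate proof, as an immediate consequence of Theorem \ref{teor:prufer-iso} via the order-compatible bijections between stable semistar operations, localizing systems, and singular length functions. Transferring $\Phi_0$ across those bijections, with the one-line check that any possible order-reversal in $\iota_1$ and $\iota_2$ cancels in $\iota_2\circ\Phi_0\circ\iota_1^{-1}$, is exactly the bookkeeping the paper leaves implicit.
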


To conclude, we express \cite[Corollary 7.5]{almded-radfact} in the terminology of this paper; see \cite{almded-radfact} for the definition of SP-scattered domain.
\begin{prop}
Let $D$ be an SP-scattered domain. Then, every stable semistar operation on $D$ is radical.
\end{prop}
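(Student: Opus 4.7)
The plan is to invoke the cited result \cite[Corollary 7.5]{almded-radfact} and transport it along the bijections established in this section. Since the paper only uses the \emph{name} ``SP-scattered domain'' and defers to \cite{almded-radfact} for the definition, no intrinsic argument on $D$ is expected here: the proposition is a dictionary translation of an existing theorem into the language of semistar operations.

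Concretely, I would proceed in three steps. First, I would recall that by \cite[Corollary 7.5]{almded-radfact}, the distinguished property of SP-scattered domains can be phrased as: every localizing system $\mathcal{F}$ on $D$ is radical, i.e., $\rad(I) \in \mathcal{F}$ implies $I \in \mathcal{F}$ (equivalently, every singular length function $\ell$ on $D$ satisfies $\tau_\ell(I) = \tau_\ell(\rad(I))$). Second, given an arbitrary stable semistar operation $\star$ on $D$, I would form the localizing system $\mathcal{F}^\star = \{I \mid 1 \in I^\star\}$ via the bijection between $\insstable(D)$ and the set of localizing systems on $D$ recalled at the start of Section \ref{sect:other}. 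Third, the cited result forces $\mathcal{F}^\star$ to be radical, and the earlier proposition of this section (equating radicality of a stable operation with radicality of its associated localizing system) then yields that $\star$ is radical, as desired.

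The only ``work'' in the proof is verifying that \cite[Corollary 7.5]{almded-radfact} is indeed phrasable as ``every localizing system is radical'' --- that is, checking that the SP-scattered hypothesis in \cite{almded-radfact} was already being used exactly to rule out the obstruction $1 \in \rad(I)^\star$ with $1 \notin I^\star$. Since this equivalence is built into the bijections of Section \ref{sect:other}, there is no genuine obstacle: the translation is automatic once the correspondence between stable operations, localizing systems, and singular length functions is in place.
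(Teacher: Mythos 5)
Your proposal matches the paper's intent: the paper gives no explicit proof for this proposition, stating only that it ``expresses [Corollary 7.5] in the terminology of this paper,'' and your three-step translation through the bijections of Section~\ref{sect:other} (stable operations $\leftrightarrow$ localizing systems, radicality preserved by the earlier proposition in this section) is exactly the dictionary the paper has in mind. The only soft spot you correctly flag yourself --- that the precise phrasing of Corollary 7.5 in the source must indeed be equivalent to ``every localizing system is radical'' or one of its translates --- is something neither you nor the paper verifies in the text, so your treatment is no less complete than the paper's own.
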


\bibliographystyle{plain}
\bibliography{/bib/miei,/bib/articoli,/bib/libri}

\begin{thebibliography}{10}

\bibitem{anderson_overrings_1988}
D.~D. Anderson.
\newblock Star-operations induced by overrings.
\newblock {\em Comm. Algebra}, 16(12):2535--2553, 1988.

\bibitem{anderson_intersections_2005}
D.~D. Anderson and Sharon~M. Clarke.
\newblock Star-operations that distribute over finite intersections.
\newblock {\em Comm. Algebra}, 33(7):2263--2274, 2005.

\bibitem{anderson_two_2000}
D.~D. Anderson and Sylvia~J. Cook.
\newblock Two star-operations and their induced lattices.
\newblock {\em Comm. Algebra}, 28(5):2461--2475, 2000.

\bibitem{spectralspaces-libro}
Max Dickmann, Niels Schwartz, and Marcus Tressl.
\newblock {\em Spectral spaces}, volume~35 of {\em New Mathematical
  Monographs}.
\newblock Cambridge University Press, Cambridge, 2019.

\bibitem{fifolo_transactions}
Carmelo~A. Finocchiaro, Marco Fontana, and K.~Alan Loper.
\newblock The constructible topology on spaces of valuation domains.
\newblock {\em Trans. Amer. Math. Soc.}, 365(12):6199--6216, 2013.

\bibitem{spettrali-eab}
Carmelo~A. Finocchiaro, Marco Fontana, and Dario Spirito.
\newblock Spectral spaces of semistar operations.
\newblock {\em J. Pure Appl. Algebra}, 220(8):2897--2913, 2016.

\bibitem{topological-cons}
Carmelo~A. Finocchiaro and Dario Spirito.
\newblock Some topological considerations on semistar operations.
\newblock {\em J. Algebra}, 409:199--218, 2014.

\bibitem{localizing-semistar}
Marco Fontana and James~A. Huckaba.
\newblock Localizing systems and semistar operations.
\newblock In {\em Non-{N}oetherian commutative ring theory}, volume 520 of {\em
  Math. Appl.}, pages 169--197. Kluwer Acad. Publ., Dordrecht, 2000.

\bibitem{fontana_loper-eab}
Marco Fontana and K.~Alan Loper.
\newblock Cancellation properties in ideal systems: a classification of
  {$\mathtt{e.a.b.}$} semistar operations.
\newblock {\em J. Pure Appl. Algebra}, 213(11):2095--2103, 2009.

\bibitem{gilmer}
Robert Gilmer.
\newblock {\em Multiplicative {I}deal {T}heory}.
\newblock Marcel Dekker Inc., New York, 1972.
\newblock Pure and Applied Mathematics, No. 12.

\bibitem{krull_breitage_I-II}
Wolfgang Krull.
\newblock Beitr\"age zur {A}rithmetik kommutativer {I}ntegrit\"atsbereiche
  i-ii.
\newblock {\em Math. Z.}, 41(1):545--577; 665--679, 1936.

\bibitem{mazur-sierp-numerabili}
Stefan Mazurkiewicz and Wac{\l}aw Sierpi\'{n}ski.
\newblock Contribution \`{a} la topologie des ensembles d\'enombrables.
\newblock {\em Fundam. Math.}, 1:17--27, 1920.

\bibitem{okabe-matsuda}
Akira Okabe and Ry{\=u}ki Matsuda.
\newblock Semistar-operations on integral domains.
\newblock {\em Math. J. Toyama Univ.}, 17:1--21, 1994.

\bibitem{stable_prufer}
Dario Spirito.
\newblock Towards a classification of stable semistar operations on a
  {P}r\"ufer domain.
\newblock {\em Comm. Algebra}, 46(4):1831--1842, 2018.

\bibitem{non-ft}
Dario Spirito.
\newblock The {Z}ariski topology on sets of semistar operations without
  finite-type assumptions.
\newblock {\em J. Algebra}, 513:27--49, 2018.

\bibitem{localizzazioni}
Dario Spirito.
\newblock Topological properties of localizations, flat overrings and
  sublocalizations.
\newblock {\em J. Pure Appl. Algebra}, 223(3):1322--1336, 2019.

\bibitem{length-funct}
Dario Spirito.
\newblock Decomposition and classification of length functions.
\newblock {\em Forum Math.}, 32(5):1109--1129, 2020.

\bibitem{jaff-derived}
Dario Spirito.
\newblock The derived sequence of a pre-{J}affard sequence.
\newblock {\em Mediterr. J. Math.}, 19(4):146, 2022.

\bibitem{almded-radfact}
Dario Spirito.
\newblock Almost {D}edekind domains without radical factorization.
\newblock submitted.

\bibitem{PicInt}
Dario Spirito.
\newblock Localizations of integer-valued polynomials and of their {P}icard
  group.
\newblock submitted.

\bibitem{zanardo_length}
Paolo Zanardo.
\newblock Multiplicative invariants and length functions over valuation
  domains.
\newblock {\em J. Commut. Algebra}, 3(4):561--587, 2011.

\end{thebibliography}
\end{document}